\theoremstyle{plain}
\newtheorem{theorem}                 {Theorem}      [section]
\newtheorem{teo}                 {Theorem}      [section]
\newtheorem{prop}  [theorem]  {Proposition}
\newtheorem{corollary}    [theorem]  {Corollary}
\newtheorem{defi}{Definition}
\newtheorem{lem}        [theorem]  {Lemma}
\theoremstyle{definition}
\newtheorem*{rmk*} {Remark}
\def \r{\mbox{${\mathbb R}$}}
\def \C{\mbox{${\mathbb C}$}}
\def \H{\mbox{${\mathbb H}$}}
\def \h{\mbox{${\mathfrak h}$}}
\def \z{\mbox{${\mathbb Z}$}}
\def \g{\mathfrak{g}}
\begin{document}

\title{ Pseudo-Riemannian Symmetries on Heisenberg group $\mathbb{H}_{3}$}
\vspace{2 cm}
\author{Michel Goze \thanks{The first author was supported by: Visiting professor program, Regione Autonoma della Sardegna - Italy. The second author was supported by GNSAGA(Italy)  } \ ,  Paola Piu}

\maketitle
\begin{abstract}
The notion of  $\Gamma$-symmetric space is a natural generalization of the classical notion of  symmetric space based on $\z_2$-grading of Lie algebras. In our case, we consider homogeneous spaces $G/H$ such that the Lie algebra $\g$ of $G$ admits a $\Gamma$-grading where $\Gamma$ is a finite abelian group.
In this work we study Riemannian metrics and Lorentzian metrics on the Heisenberg group $\mathbb{H}_3$ adapted to the symmetries of a $\Gamma$-symmetric structure on $\mathbb{H}_3$. We prove that the classification of $\z_2^2$-symmetric Riemannian and Lorentzian metrics on $\mathbb{H}_3$ corresponds to the classification  of left invariant Riemannian and Lorentzian metrics, up to isometries. This gives examples of non-symmetric Lorentzian homogeneous spaces.

\end{abstract}

\vskip2mm
\noindent{\bf Mathematics Subject Classification 2000:}
22F30, 53C30, 53C35,17B70
{\bf Key words: }$\Gamma$-symmetric spaces, Heisenberg group, Graded Lie algebras.

\section{$\Gamma $-symmetric spaces}
Let $\Gamma $ be a finite abelian group. A $\Gamma$-{\it symmetric space} is an homogeneous space $ G/H $ such that there exists an injective homomorphism
\[
\rho : \Gamma \to Aut(G)
\]
 where $ Aut (G) $ is the group of automorphisms of the Lie group $ G $, the subgroup $ H $ satisfying $G^{\Gamma}_{e} \subset H \subset G^{\Gamma}$  where
$ G^{\Gamma} = \left\{ x\in G/ \rho(\gamma)(x) = x, \forall \gamma \in \Gamma \right\}$ and $G^{\Gamma}_{e}$ is the connected identity component of $ G^{\Gamma} $  of $G$.\\
The notion of $\Gamma $-symmetric space is a generalization of the classical notion of symmetric space by considering a general finite abelian group of symmetries $\Gamma$ instead of  $\z_2$. The case
 $\Gamma = \z_k$, the cyclic group of order $ k $, was considered by
 A.J. Ledger, M. Obata \cite{L.O}, A. Gray, J. A. Wolf,  \cite{[G.W]} and O. Kowalski \cite{[K]} in terms of $k$-symmetric spaces. The general notion of $\Gamma$-symmetric spaces was introduced by R. Lutz \cite{[L]} and was algebraically reconsidered by Y. Bahturin and M. Goze \cite{[B.G]}.
In this last work the authors proved, in particular, that a  $\Gamma$-symmetric space $M=G/H$ is reductive and the Lie algebra $\mathfrak g$ of $G$ is  $\Gamma$-graded, that is,
\[
\mathfrak{g} =  \underset{\gamma \in \Gamma}{\oplus}\; \mathfrak{g}_{\gamma}
\]
with
\[
[\mathfrak{g}_{\gamma},\mathfrak{g}_{\gamma'}] \subset \mathfrak{g}_{\gamma \gamma'}
\qquad \qquad  \forall \; \gamma, \gamma' \in \Gamma.
\]
\noindent{\bf Examples.}
\begin{enumerate}
\item If $\Gamma = \z_2$ and $\g$  a  complex or real Lie algebra, a $\Gamma$-grading of $\g$ corresponds to the classical symmetric decomposition of $\g$.
\item If $\g$ is a simple complex Lie algebra and $\Gamma=\z_k$, $k \geq 3$, we have the notion of generalized symmetric spaces and the classification of $\Gamma$-gradings are described by V. Kac in \cite{[Ka]}.
\item Let $\mathfrak {g} = \oplus _ {\gamma \in \Gamma} \mathfrak {g}_ {\gamma}$ be a Lie algebra $ \Gamma $-graded. For any commutative associative algebra $\mathcal {A}$, the
current algebra $ \mathcal {A} \otimes\mathfrak {g} $ (see \cite{[G.R prod tens]}) also admits a $\Gamma $-grading.
\item In \cite{[B.G]}, the $\z_2 \times \z_2$ grading on classical simple complex Lie algebras are classified.

    \end{enumerate}

\medskip

One proves also in \cite{[B.G]} that the structure of $\Gamma$-symmetric space on $G/H$ is, when $G$ is connected, completely determinate by the $\Gamma$-grading of $\g$. Thus, if $G$ is connected, the classification of the $\Gamma$-symmetric spaces is equivalent to the classification of the $\Gamma$-graded Lie algebras. Many results of this last problem concern more particularly  the simple Lie algebras. For solvable or nilpotent Lie algebras, it is an open problem. A first approach is to study induced grading on Borel or parabolic subalgebras of simple Lie algebras. In this work we describe $\Gamma$-grading of the Heisenberg algebra $\h_3$. Two reasons for this study
\begin{itemize}
\item Heisenberg algebras are nilradical of some Borel subalgebras.
\item The Riemannian and Lorentzian geometries on the $3$-dimensional Heisenberg group have been studied recently by many authors. Thus it is interesting to study the Riemannian and Lorentzian symmetries with the natural symmetries associated with a $\Gamma$-symmetric structure on the Heisenberg group. In this paper we prove that these geometries are entirely determinate by Riemannian and Lorentzian structures adapted to $(\z^2 \times \z^2)$-symmetric structures.
\end{itemize}

Recall that the Heisenberg algebra $\mathfrak{h}_3$ is the real Lie algebra whose elements are matrices
\[
\begin{pmatrix}
0&x&z\\
0&0&y\\
0&0&0
\end{pmatrix}
\qquad  \mbox{with} \qquad x,y,z \in \r
\]
The elements of  $\mathfrak h_{3}$, $X_1$, $X_2$, $X_3$, corresponding to $(x,y,z) =(1,0,0),(0,1,0)$ and $(0,0,1)$ form a basis of $\mathfrak h_{3}$ and the Lie brackets are given in this basis by
\[
\begin{cases}
[X_1,X_2] = X_3\\
[X_1,X_3] = [X_2,X_3] = 0 .
\end{cases}
\]
The Heisenberg group is the real Lie group of dimension $ 3 $ consisting of matrices
\[
\begin{pmatrix}
1&a&c\\
0&1&b\\
0&0&1
\end{pmatrix}
\qquad \qquad a,b,c \in \r
\]
and its Lie algebra is $\mathfrak h_3$.
\section{Finite abelian subgroups of  $Aut(\mathfrak h_3)$}
Let us denote by $Aut(\h_3)$ the group of automorphisms of the Heisenberg algebra $\h_3$.
Every  $\tau \in Aut(\h_3)$ admits, with regards to the basis $\{X_1,X_2,X_3\}$, the following matricial representation:
\[
\begin{pmatrix}
\alpha_1&\alpha_2&0\\
\alpha_3&\alpha_4&0\\
\alpha_5&\alpha_6&\Delta
\end{pmatrix}
\]
with $\Delta = \alpha_1 \alpha_4 - \alpha_2 \alpha_3 \neq 0$.

Let $\Gamma $ be a finite  abelian subgroup of $Aut(\h_3)$. It admits a cyclic decomposition. If $\Gamma $ contains a cyclical component isomorphic to $\z_k $,  then there exists an automorphism $\tau $ satisfying $\tau^ k = Id $. The aim of this section is to determinate  the cyclic decomposition of any finite abelian subgroup $\Gamma$.

\subsection{Subgroups of $Aut(\h_3)$ isomorphic to $\z_2$}

Let $\tau \in Aut(\h_3)$ satisfying $\tau^2 = Id $. If
\[
\begin{pmatrix}
\alpha_1&\alpha_2&0\\
\alpha_3&\alpha_4&0\\
\alpha_5&\alpha_6&\Delta
\end{pmatrix}
\]
is its matricial representation, then the involution can be written in matrix form
\[
\begin{pmatrix}
\alpha_1^{2}+ \alpha_{2}\alpha_{3}&\alpha_{1}\alpha_2 + \alpha_{2}\alpha_{4}&0\\
\alpha_1\alpha_3+\alpha_3 \alpha_4&\alpha_{2}\alpha_{3}+\alpha_4^{2}&0\\
\alpha_1\alpha_5+ \alpha_3 \alpha_6 + \Delta \alpha_5&
\alpha_2\alpha_5+\alpha_4\alpha_6+\Delta \alpha_6&\Delta^{2}
\end{pmatrix} =
\begin{pmatrix}
1&0&0\\
0&1&0\\
0&0&1
\end{pmatrix}
\]
The resolution of the system can be done using formal calculation software. Here we use Mathematica.
\begin{prop}\label{lemma1}
Any involutive automorphism $\tau $ of $ Aut (\mathfrak h_3) $ is equal to one of the following automorphisms
\[
Id, \ \tau_1(\alpha_3,\alpha_6) = \begin{pmatrix}
-1&0&0\\
\alpha_3&1&0\\
\displaystyle\frac{\alpha_{3}\alpha_6}{2}&\alpha_6&-1
\end{pmatrix},  \tau_2(\alpha_3,\alpha_5) = \begin{pmatrix}
1&0&0\\
\alpha_3&-1&0\\
\alpha_5&0&-1
\end{pmatrix},
\]
\[
\tau_3(\alpha_1,\alpha_{2}\neq 0,\alpha_6) = \begin{pmatrix}
\alpha_1&\alpha_{2}&0\\
\displaystyle\frac{1-\alpha_1^{2}}{\alpha_{2}}&-\alpha_{1}&0\\
\displaystyle\frac{(1 + \alpha_{1})\alpha_6}{\alpha_{2}}&\alpha_6&-1
\end{pmatrix},
 \tau_4(\alpha_5,\alpha_6) = \begin{pmatrix}
-1&0&0\\
0&-1&0\\
\alpha_5&\alpha_{6}&1
\end{pmatrix}  .
\]
\end{prop}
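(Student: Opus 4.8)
The plan is to exploit the block structure of the automorphism matrix. Writing $A = \begin{pmatrix} \alpha_1 & \alpha_2 \\ \alpha_3 & \alpha_4 \end{pmatrix}$ for the upper-left $2\times 2$ block, the matrix of $\tau$ is block lower-triangular: $A$ sits in the top-left corner, the third column above the entry $\Delta = \det A$ is zero, and the bottom row is $(\alpha_5, \alpha_6, \Delta)$. The condition $\tau^2 = \mathrm{Id}$ then decouples into three independent pieces: the upper $2\times 2$ block gives $A^2 = I$, the $(3,3)$ entry gives $\Delta^2 = 1$ (which is automatic, since $\det(A^2)=1$), and the bottom row gives two linear equations in $\alpha_5, \alpha_6$ whose coefficients depend on the entries of $A$ and on $\Delta$. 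This is exactly the system displayed before the statement, read off block by block.

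First I would classify the $2\times 2$ involutions $A$. Rather than grind through the four quadratic entry-equations, I would invoke Cayley-Hamilton: $A^2 - (\operatorname{tr} A)\,A + (\det A)\,I = 0$. Substituting $A^2 = I$ yields $(\operatorname{tr} A)\,A = (1 + \det A)\,I$. If $\operatorname{tr} A \neq 0$ then $A$ is a scalar matrix, hence $A = \pm I$, both with $\det A = 1$ and so $\Delta = 1$; if $\operatorname{tr} A = 0$ then necessarily $\det A = -1$ (so $\Delta = -1$), giving $\alpha_4 = -\alpha_1$ and $\alpha_1^2 + \alpha_2\alpha_3 = 1$. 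This reduces the whole problem to three families for $A$ and cleanly avoids solving the quadratic system by hand.

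Next I would substitute each family into the two bottom-row equations. When $A = I$ they read $2\alpha_5 = 0$, $2\alpha_6 = 0$, forcing $\tau = \mathrm{Id}$; when $A = -I$ both equations are vacuous, leaving $\alpha_5, \alpha_6$ free and yielding $\tau_4$. In the trace-zero case (where $\Delta = -1$ and $\alpha_4 = -\alpha_1$) the equations become $(\alpha_1 - 1)\alpha_5 + \alpha_3\alpha_6 = 0$ and $\alpha_2\alpha_5 - (\alpha_1 + 1)\alpha_6 = 0$, and here I would split on whether $\alpha_2 = 0$. If $\alpha_2 = 0$ then $\alpha_1^2 = 1$: the branch $\alpha_1 = 1$ forces $\alpha_6 = 0$ with $\alpha_5$ free (family $\tau_2$), while $\alpha_1 = -1$ forces $\alpha_5 = \alpha_3\alpha_6/2$ with $\alpha_6$ free (family $\tau_1$). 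If $\alpha_2 \neq 0$ then $\alpha_3 = (1 - \alpha_1^2)/\alpha_2$, the second equation gives $\alpha_5 = (\alpha_1 + 1)\alpha_6/\alpha_2$, and the first equation turns out to be automatically satisfied, leaving $\alpha_6$ free (family $\tau_3$).

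The only step requiring genuine care is this last consistency check in the $\alpha_2 \neq 0$ branch: I expect the main obstacle to be verifying that the two linear bottom-row constraints become dependent precisely when $\operatorname{tr} A = 0$ and $\det A = -1$, so that one parameter ($\alpha_6$) survives instead of being pinned to zero. This dependence is forced by the relation $(\alpha_1 - 1)(\alpha_1 + 1) = \alpha_1^2 - 1 = -\alpha_2\alpha_3$, which ties the coefficients of the two equations together; tracking this identity correctly is what produces the genuine one-parameter families $\tau_1, \tau_2, \tau_3$ rather than isolated solutions, and it is where a symbolic solver such as the Mathematica computation mentioned in the text earns its keep.
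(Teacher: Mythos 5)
Your proposal is correct and complete, but it takes a genuinely different route from the paper: the paper's entire proof consists of writing out the entry-by-entry matrix equation for $\tau^2 = \mathrm{Id}$ (the display preceding the Proposition) and then stating that the system is solved by Mathematica, whereas you give an actual hand argument. The decoupling you use --- upper block condition $A^2 = I$, the $(3,3)$-entry condition $\Delta^2 = 1$ being automatic since $\Delta = \det A$, and the bottom row amounting to $(\alpha_5,\alpha_6)(A + \Delta I) = 0$ --- is exactly the structure hidden in the paper's displayed system, and Cayley--Hamilton cleanly reduces the quadratic part to the dichotomy $A = \pm I$ (so $\Delta = 1$) versus $\operatorname{tr} A = 0$, $\det A = -1$ (so $\Delta = -1$). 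Your subsequent case analysis is accurate and exhaustive: it recovers precisely $\mathrm{Id}$ (from $A = I$), $\tau_4$ (from $A = -I$, where the bottom-row equations vanish identically), $\tau_2$ and $\tau_1$ (from $\alpha_2 = 0$ with $\alpha_1 = 1$, resp.\ $\alpha_1 = -1$, including the constraint $\alpha_5 = \alpha_3\alpha_6/2$), and $\tau_3$ (from $\alpha_2 \neq 0$, with $\alpha_3 = (1-\alpha_1^2)/\alpha_2$ and $\alpha_5 = (1+\alpha_1)\alpha_6/\alpha_2$). What your route buys is verifiability without software, plus a structural explanation of the one-parameter freedom in $(\alpha_5,\alpha_6)$ that the CAS output leaves opaque: in the trace-zero case $A - I$ has eigenvalues $0$ and $-2$, hence rank one, so its left kernel is a line, which makes the consistency check in your $\alpha_2 \neq 0$ branch automatic rather than a coincidence to be verified. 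One small wording slip: the two bottom-row constraints are also dependent when $A = -I$ (both vanish), so their dependency is not \emph{precisely} confined to the trace-zero case; this does not affect the argument, and your proof could replace the paper's appeal to Mathematica verbatim.
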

\begin{corollary}
Any subgroup of $Aut(\h_3)$ isomorphic to $\z_{2}$ is equal to one of the following:
\begin{enumerate}
\item $\Gamma_1(\alpha_3,\alpha_6)=\{Id,\tau_1(\alpha_3,\alpha_6)\}$,
\item $\Gamma_2(\alpha_3,\alpha_5)=\{Id,\tau_2(\alpha_3,\alpha_5)\}$,
\item $\Gamma_3(\alpha_1,\alpha_2,\alpha_6)=\{Id,\tau_3(\alpha_1,\alpha_2,\alpha_6), \ \alpha_2 \neq 0\}$,
\item $\Gamma_4(\alpha_5,\alpha_6)=\{Id,\tau_4(\alpha_5,\alpha_6)\}$.
\end{enumerate}
\end{corollary}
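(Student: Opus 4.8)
The plan is to derive this directly from Proposition~\ref{lemma1}, which already does the heavy computational lifting. A subgroup $\Gamma \subset Aut(\h_3)$ isomorphic to $\z_2$ consists of exactly two elements: the identity $Id$ and a single element $\tau \neq Id$ of order $2$, so that $\Gamma = \{Id, \tau\}$. The condition that $\tau$ has order $2$ is precisely $\tau^2 = Id$ together with $\tau \neq Id$, i.e. $\tau$ is a nontrivial involutive automorphism of $\h_3$. Thus the classification of $\z_2$-subgroups reduces entirely to the classification of nontrivial involutions.

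First I would observe that this correspondence is a genuine bijection: each $\z_2$-subgroup is uniquely recovered from its distinguished generator $\tau$ (its only non-identity element), and conversely any nontrivial involution $\tau$ generates the two-element set $\{Id, \tau\}$, which is closed under composition since $\tau \cdot \tau = \tau^2 = Id$. Hence determining all $\z_2$-subgroups is the same as listing all nontrivial involutions together with the pairs they generate.

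Next I would simply invoke Proposition~\ref{lemma1}: every involutive automorphism of $\h_3$ is either $Id$ or belongs to one of the four families $\tau_1(\alpha_3,\alpha_6)$, $\tau_2(\alpha_3,\alpha_5)$, $\tau_3(\alpha_1,\alpha_2,\alpha_6)$ (with $\alpha_2 \neq 0$), or $\tau_4(\alpha_5,\alpha_6)$. Discarding $Id$, the nontrivial involutions are exactly the members of these four families, so each $\z_2$-subgroup is $\{Id,\tau_i\}$ for some $i \in \{1,2,3,4\}$ and suitable parameter values; that is, it equals one of $\Gamma_1(\alpha_3,\alpha_6)$, $\Gamma_2(\alpha_3,\alpha_5)$, $\Gamma_3(\alpha_1,\alpha_2,\alpha_6)$, $\Gamma_4(\alpha_5,\alpha_6)$.

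Since the substantive classification is already contained in Proposition~\ref{lemma1}, there is no real obstacle remaining; the corollary is essentially a reindexing of that result. The only point deserving a word is the easy verification that each listed family genuinely consists of involutions, so that the $\Gamma_i$ are honest subgroups rather than merely candidate sets — but this is automatic, since every matrix in the four families arose precisely as a solution of $\tau^2 = Id$. I would therefore present the corollary as an immediate consequence, noting that the statement asserts only exhaustiveness ("equal to one of the following") and makes no disjointness claim among the four families, so no further analysis of possible overlaps between them is required.
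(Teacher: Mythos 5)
Your proposal is correct and matches the paper's reasoning exactly: the paper states this corollary without proof, treating it (as you do) as an immediate consequence of Proposition~1, since a $\z_2$-subgroup is precisely $\{Id,\tau\}$ for a nontrivial involution $\tau$, and those involutions are classified there. Your added remarks on the bijection and on the absence of any disjointness claim are sound but not needed beyond what the paper implicitly assumes.
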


\subsection{Subgroups of $Aut(\h_3)$ isomorphic to $\z_3$}
Let $\tau$ be an automorphism  satisfying $\tau^3 = Id$. This identity is equivalent to
$
\tau^{2} = \tau^{-1}.
$
If we have
\[
\tau = \begin{pmatrix}
\alpha_1&\alpha_2&0\\
\alpha_3&\alpha_4&0\\
\alpha_5&\alpha_6&\Delta
\end{pmatrix}
,\qquad \qquad \Delta = \alpha_{1}\alpha_{4} - \alpha_{2}\alpha_{3},
\]
 then
\[
\tau^{-1}= \frac{1}{\Delta}
\begin{pmatrix}
\alpha_4&-\alpha_2&0\\
-\alpha_3&\alpha_1&0\\
\displaystyle\frac{\alpha_{3}\alpha_{6} -\alpha_{4}\alpha_5}{\Delta}&\displaystyle\frac{\alpha_{2}\alpha_5-\alpha_{1}\alpha_6}{\Delta}&1
\end{pmatrix}
\]
The condition $\tau^{2} = \tau^{-1} $ implies
$
\Delta^{3} = 1 
$ and the only real solution is $\Delta =  1.$ 
Thus $\tau^{2} = \tau^{-1}$ is equivalent to
\begin{equation}\label{sist-t3}
\begin{cases}
\alpha_{1}\alpha_{4} - \alpha_{2}\alpha_{3} =1,\\
\alpha_{1}^{2} +\alpha_{2} \alpha_{3} = \alpha_{4},\\
\alpha_{4}^{2} +\alpha_{2} \alpha_{3} = \alpha_{1}, \\
\alpha_{2}\left(1 + \alpha_{1} + \alpha_{4}\right) = 0,\\
\alpha_{3}\left(1 + \alpha_{1} + \alpha_{4}\right) = 0,\\
\alpha_{5}\left(1 + \alpha_{1} + \alpha_{4}\right) = 0,\\
\alpha_{6}\left(1 + \alpha_{1} + \alpha_{4}\right) = 0.
\end{cases}
\end{equation}
If  $\alpha_{1} + \alpha_{4} \neq -1,$  then
$
\alpha_{2} = \alpha_{3} =  \alpha_{5} = \alpha_{6} = 0$ and $ \alpha_{1} = \alpha_{4} = 1.
$
In this case
\[
\tau = Id.
\]
Let us assume $\alpha_{1} + \alpha_{4} = -1$. Then  \eqref{sist-t3}
is reduced to
\[
\alpha_{1}^{2} +\alpha_{1} + \alpha_{2} \alpha_{3} + 1 = 0
\]
If $\alpha_{2} \alpha_{3} > - \frac{3}{4},$ we have no solutions.
Assume  that $\alpha_{2} \alpha_{3} \leq - \frac{3}{4}$.  Then
\[
\alpha_{1} = \frac{-1 \pm \sqrt{-3 - 4 \alpha_{2} \alpha_{3}}}{2}.
\]
So we obtain
\[
\tau_{5} = \begin{pmatrix}
\displaystyle\frac{-1 - \sqrt{-3 - 4 \alpha_{2} \alpha_{3}}}{2}&\alpha_2&0\\
\alpha_3&\displaystyle\frac{-1 + \displaystyle\sqrt{-3 - 4 \alpha_{2} \alpha_{3}}}{2}&0\\
\alpha_5&\alpha_6&1
\end{pmatrix},
\]
and
\[
\tau'_{5} = \begin{pmatrix}
\displaystyle\frac{-1 + \sqrt{-3 - 4 \alpha_{2} \alpha_{3}}}{2}&\alpha_2&0\\
\alpha_3&\displaystyle\frac{-1 - \sqrt{-3 - 4 \alpha_{2} \alpha_{3}}}{2}&0\\
\alpha_5&\alpha_6&1
\end{pmatrix}.
\]
Since
\[
\tau^{2}_{5}(\alpha_2, \alpha_3, \alpha_5, \alpha_6) = \tau'_{5}(-\alpha_{2}, -\alpha_{3}, \alpha'_{5},\alpha'_{6})
\]
where
$$
\alpha'_{5}=\displaystyle\frac{\alpha_5 - \sqrt{-3 - 4 \alpha_2 \alpha_3} \alpha_5 - 2 \alpha_3 \alpha_6}{2}, \ \ 
\alpha'_{6}= \displaystyle\frac{\alpha_6 + \sqrt{-3 - 4 \alpha_2 \alpha_3} \alpha_6 -2 \alpha_2 \alpha_5 }{2}
$$
and
\[
\tau'^{2}_{5}(\alpha_2, \alpha_3, \alpha_5, \alpha_6) = \tau_{5}(-\alpha_{2}, -\alpha_{3}, \alpha''_{5},\alpha''_{6})
\]
where
$$
\alpha''_{5}=\displaystyle\frac{\alpha_5 + \sqrt{-3 - 4 \alpha_2 \alpha_3} \alpha_5 - 2 \alpha_3 \alpha_6}{2}, \ \
\alpha''_{6}= \displaystyle\frac{\alpha_6 - \sqrt{-3 - 4 \alpha_2 \alpha_3} \alpha_6 -2 \alpha_2 \alpha_5 }{2},
$$
we deduce

\begin{prop}
Any abelian subgroup of $Aut(\mathfrak h_3) $ isomorphic to $\z_3$ is equal to
\[
\Gamma_{5}(\alpha_2,\alpha_3,\alpha_5,\alpha_6) =\left\{Id,\tau_{5}(\alpha_2, \alpha_3, \alpha_5, \alpha_6),\tau'_{5}(-\alpha_2, -\alpha_3, \alpha'_5 , \alpha'_6), \ \ 4\alpha_{2} \alpha_{3} \leq -3 \right\}.
\]
\end{prop}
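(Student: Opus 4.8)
The plan is to use the fact that a subgroup isomorphic to $\z_3$ is cyclic of order three, together with the classification of order-three automorphisms and the squaring identities established just above the statement.

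First I would note that any $\Gamma\subset Aut(\h_3)$ with $\Gamma\cong\z_3$ is generated by a single element $\tau$ of order three; thus $\tau^3=Id$, $\tau\neq Id$, and $\Gamma=\{Id,\tau,\tau^2\}$ has exactly three elements, with $\tau^2=\tau^{-1}\neq\tau$. By the preceding analysis the only automorphisms with $\tau^3=Id$ and $\tau\neq Id$ are the $\tau_5(\alpha_2,\alpha_3,\alpha_5,\alpha_6)$ and the $\tau'_5(\alpha_2,\alpha_3,\alpha_5,\alpha_6)$ subject to $4\alpha_2\alpha_3\le-3$; recall that $\tau_5$ and $\tau'_5$ differ only by the sign of $\sqrt{-3-4\alpha_2\alpha_3}$ appearing in the $(1,1)$ and $(2,2)$ entries, and that they coincide precisely on the boundary $4\alpha_2\alpha_3=-3$.

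Next I would normalize the choice of generator. Of the two generators $\tau$ and $\tau^2$, at least one is of $\tau_5$-type: if $\tau=\tau_5(\alpha_2,\alpha_3,\alpha_5,\alpha_6)$ there is nothing to do, whereas if $\tau=\tau'_5(\beta_2,\beta_3,\beta_5,\beta_6)$ then the identity $(\tau'_5)^2=\tau_5(-\beta_2,-\beta_3,\beta''_5,\beta''_6)$ exhibits $\tau^2$ as an element of $\tau_5$-type, which I then take as the generator. After this relabelling $\Gamma=\langle\tau_5(\alpha_2,\alpha_3,\alpha_5,\alpha_6)\rangle$ for suitable parameters with $4\alpha_2\alpha_3\le-3$. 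Its third element is $\tau_5^2$, and the identity $\tau_5^2=\tau'_5(-\alpha_2,-\alpha_3,\alpha'_5,\alpha'_6)$ recorded above identifies it exactly as $\tau'_5(-\alpha_2,-\alpha_3,\alpha'_5,\alpha'_6)$. Hence $\Gamma=\Gamma_5(\alpha_2,\alpha_3,\alpha_5,\alpha_6)$, as claimed.

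Finally I would confirm the converse, namely that each such set is a genuine subgroup isomorphic to $\z_3$: since $\tau'_5(-\alpha_2,-\alpha_3,\alpha'_5,\alpha'_6)=\tau_5^2=\tau_5^{-1}$, the set $\Gamma_5(\alpha_2,\alpha_3,\alpha_5,\alpha_6)$ is closed under products and inverses and $\tau_5$ has order three. I expect the only real obstacle to be bookkeeping: one must check that the translation parameters $\alpha'_5,\alpha'_6$ (and $\beta''_5,\beta''_6$ in the relabelling) produced by squaring are mutually consistent, so that both admissible choices of generator give the very same set $\Gamma$. This consistency is precisely what the explicit formulas for $\alpha'_5,\alpha'_6$ and $\alpha''_5,\alpha''_6$ stated before the proposition encode, so no further computation is needed.
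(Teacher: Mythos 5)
Your proposal is correct and follows essentially the same route as the paper: classify the order-three automorphisms as the families $\tau_5$, $\tau'_5$ (with $4\alpha_2\alpha_3\le -3$) and then use the squaring identities $\tau_5^2=\tau'_5(-\alpha_2,-\alpha_3,\alpha'_5,\alpha'_6)$ and $\tau'^2_5=\tau_5(-\alpha_2,-\alpha_3,\alpha''_5,\alpha''_6)$ to identify any cyclic group of order three with some $\Gamma_5(\alpha_2,\alpha_3,\alpha_5,\alpha_6)$. The paper leaves this deduction implicit after stating the squaring formulas, whereas you spell out the normalization of the generator and the closure check, which is a faithful elaboration rather than a different argument.
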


\subsection{Subgroups of $Aut(\h_3)$ isomorphic to $\z_k, \ k >3$}
If $\tau \in Aut(\h_3)$ satisfies $\tau^k=Id, \ k>3$, its minimal polynomial has $3
$ simple roots and it is of degree $3$. More precisely, it is written
\[
m_\tau(x)=(x-1)(x-\mu_k)(x-\overline{\mu_k})
\]
where $\mu_k$ is a root of order $k$ of $1$. As  $\tau$ has to generate a cyclic subgroup of $Aut(\h_3)$ isomorphic to $\z_k$, the root $\mu_k$ is a primitive root of $1$. There exists $m$, a prime number with $k$ such that $\mu_k= exp(\displaystyle\frac{2mi\pi}{k}).$ If
\[
\tau = \begin{pmatrix}
\alpha_1&\alpha_2&0\\
\alpha_3&\alpha_4&0\\
\alpha_5&\alpha_6&\Delta
\end{pmatrix}
\]
is the matricial representation of $\tau$, then $\Delta=1$ and $\alpha_1+\alpha_4=2\displaystyle\cos\frac{2m\pi}{k}$. Thus
\[
\left\{
\begin{array}{l}
\alpha_1=\displaystyle\cos\frac{2m\pi}{k} - \sqrt{\cos^2\frac{2m\pi}{k} - 1- \alpha_{2} \alpha_{3}},\\
\alpha_4=\displaystyle\cos\frac{2m\pi}{k} + \sqrt{\cos^2\frac{2m\pi}{k} - 1- \alpha_{2} \alpha_{3}},\\
\end{array}
\right.
\]
or
\[
\left\{
\begin{array}{l}
\alpha_1=\displaystyle\cos\frac{2m\pi}{k} + \sqrt{\cos^2\frac{2m\pi}{k} - 1- \alpha_{2} \alpha_{3}},\\
\alpha_4=\displaystyle\cos\frac{2m\pi}{k} - \sqrt{\cos^2\frac{2m\pi}{k} - 1- \alpha_{2} \alpha_{3}}.\\
\end{array}
\right.
\]
If $\tau'$ and $\tau ''$ denote the automorphisms corresponding to these solutions, we have, for a good choice of the parameters $\alpha_{i}$,  $\tau' \circ \tau '' =Id$ and $\tau''=(\tau')^{k-1}.$
Thus these automorphisms generate the same subgroup of $Aut(\h_3)$. Moreover, with same considerations, we can choose
$m=1$. Thus we have determinate the automorphism
$\tau_6(\alpha_2,\alpha_3,\alpha_5,\alpha_6)$ whose matrix is
\[
 \begin{pmatrix}
\displaystyle\cos\frac{2\pi}{k} + \sqrt{\cos^2\frac{2\pi}{k} - 1- \alpha_{2} \alpha_{3}}&\alpha_2&0\\
\alpha_3&\displaystyle\cos\frac{2\pi}{k} - \sqrt{\cos^2\frac{2\pi}{k} - 1- \alpha_{2} \alpha_{3}}&0\\
\alpha_5&\alpha_6&1
\end{pmatrix}
\]
\begin{prop}
Any abelian subgroup of $Aut(\mathfrak h_3) $ isomorphic to $\z_k$ is equal to
\[
\Gamma_{6,k}(\alpha_2,\alpha_3,\alpha_5,\alpha_6) =\left\{Id,\tau_{6}(\alpha_2, \alpha_3, \alpha_5, \alpha_6),\cdots,\tau_{6}^{k-1}, \ \ \alpha_{2} \alpha_{3} \leq -1+ \displaystyle\cos^2\frac{2\pi}{k}\right\}.
\]
\end{prop}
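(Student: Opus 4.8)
The plan is to show that a subgroup $\Gamma\cong\z_k$ is forced to be cyclic with a generator of the normal form $\tau_6$, then to read off both the admissible range of the parameters and the fact that the cyclic group generated is exactly $\Gamma_{6,k}$. I would fix a generator $\tau$ of $\Gamma$, of order exactly $k$, and write it in the matricial form of Section 2. Because of the block-triangular shape, its characteristic polynomial factors as $(\Delta-x)\bigl(x^2-(\alpha_1+\alpha_4)x+\Delta\bigr)$. Since $\tau^k=Id$, $\tau$ is annihilated by the separable polynomial $x^k-1$, hence is diagonalizable with all eigenvalues $k$-th roots of unity; as $\tau$ generates $\z_k$, one of them is a primitive root $\mu_k$, which for $k>2$ is non-real. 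A non-real eigenvalue of a real matrix forces the conjugate pair $\mu_k,\overline{\mu_k}$, and since $\Delta\in\{1,-1\}$ is real these two must be the roots of the quadratic factor. The constant term of that quadratic equals $\Delta$ and equals $\mu_k\overline{\mu_k}=|\mu_k|^2=1$, which forces $\Delta=1$; the linear term gives $\alpha_1+\alpha_4=\mu_k+\overline{\mu_k}=2\cos(2m\pi/k)$. This is exactly the reduction carried out before the statement, and it also shows the eigenvalues $1,\mu_k,\overline{\mu_k}$ are pairwise distinct, so $\tau$ is genuinely diagonalizable with minimal polynomial of degree $3$.

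Next I would normalize the generator. Replacing $\tau$ by the power $\tau^j$ with $mj\equiv 1\pmod k$ (possible since $\gcd(m,k)=1$) yields another generator of the same $\Gamma$ whose primitive eigenvalue is $e^{2i\pi/k}$, i.e. the case $m=1$; the two sign choices for $(\alpha_1,\alpha_4)$ merely interchange $\mu_k$ and $\overline{\mu_k}$, that is, they replace the generator by its inverse $\tau^{k-1}$, so they span the same subgroup. Fixing the sign as in the displayed matrix identifies this generator with $\tau_6(\alpha_2,\alpha_3,\alpha_5,\alpha_6)$. Solving $\alpha_1+\alpha_4=2\cos(2\pi/k)$ and $\alpha_1\alpha_4=1+\alpha_2\alpha_3$ for the real diagonal entries gives $\alpha_1,\alpha_4=\cos(2\pi/k)\pm\sqrt{\cos^2(2\pi/k)-1-\alpha_2\alpha_3}$, and reality of these entries is equivalent to the discriminant being nonnegative, i.e. to $\alpha_2\alpha_3\le -1+\cos^2(2\pi/k)$, which is precisely the stated constraint. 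The parameters $\alpha_5,\alpha_6$ remain unconstrained, since they affect neither $\Delta$ nor the characteristic polynomial.

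Finally I would check the converse inclusion together with the order. For any admissible $\alpha_2,\alpha_3$ and arbitrary $\alpha_5,\alpha_6$, the matrix $\tau_6$ has the three distinct eigenvalues $1,e^{\pm 2i\pi/k}$, hence is diagonalizable, so $\tau_6^{\,k}$ has all eigenvalues equal to $1$ and therefore $\tau_6^{\,k}=Id$; since $e^{2i\pi/k}$ is primitive, the order is exactly $k$, whence $\langle\tau_6\rangle\cong\z_k$ and $\langle\tau_6\rangle=\{Id,\tau_6,\dots,\tau_6^{k-1}\}=\Gamma_{6,k}$. Combining the two inclusions yields $\Gamma=\Gamma_{6,k}(\alpha_2,\alpha_3,\alpha_5,\alpha_6)$.

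I expect the main obstacle to be the normalization step: verifying that the freedom in the primitive root (the value of $m$ versus the chosen $m=1$) and the sign ambiguity in $(\alpha_1,\alpha_4)$ genuinely collapse to a single cyclic group with one normal-form generator, and—more delicately—that the lower-row parameters $\alpha_5,\alpha_6$ impose no hidden relation on the others. The latter is exactly where diagonalizability is essential: it is what guarantees $\tau_6^{\,k}=Id$ irrespective of $\alpha_5,\alpha_6$, so that these two parameters stay free in the final parametrization of $\Gamma_{6,k}$.
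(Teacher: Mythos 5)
Your proposal is correct and follows essentially the same route as the paper: the eigenvalue analysis forcing $\Delta = 1$ and $\alpha_1+\alpha_4 = 2\cos(2m\pi/k)$, the normalization to $m=1$ and to a single sign choice by passing to a suitable power (inverse) of the generator, and the parametrization of $\alpha_1,\alpha_4$ with the discriminant condition $\alpha_2\alpha_3 \leq -1+\cos^2(2\pi/k)$. Your additions --- the explicit replacement of $\tau$ by $\tau^j$ with $mj\equiv 1\pmod{k}$, and the converse check that $\tau_6$ has order exactly $k$ for all admissible parameters because its three eigenvalues are distinct --- merely make explicit what the paper leaves implicit.
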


\subsection{General case}
Now suppose that the cyclic decomposition of a finite abelian subgroup $\Gamma$ of $Aut(\h_3)$ is isomorphic to
$\z_2^{k_2} \times \z_3^{k_3} \times \cdots \times \z_p^{k_p}$ with $k_i \geq 0.$ 
\begin{lem}
Let $\Gamma$ be an abelian finite subgroup of $Aut(\h_3)$ with a cyclic decomposition isomorphic to
\[
\z_2^{k_2} \times \z_3^{k_3} \times \cdots \times \z_p^{k_p}.
\]
Then
\begin{itemize}
\item If there is $i\geq 3$ such that $k_i \neq 0$, then $k_2 \leq 1.$
\item If $k_2 \geq 2$, then $\Gamma$ is isomorphic to $\z_2^{k_2}$.
\end{itemize}
\end{lem}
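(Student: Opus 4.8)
The plan is to exploit the homomorphism $\pi\colon \mathrm{Aut}(\h_3)\to GL(2,\r)$ that sends an automorphism to its upper-left $2\times2$ block $\left(\begin{smallmatrix}\alpha_1&\alpha_2\\\alpha_3&\alpha_4\end{smallmatrix}\right)$. Multiplying two matrices of the general form recalled at the beginning of this section shows at once that $\pi$ is a group homomorphism (the blocks multiply as in $GL(2,\r)$, and $\Delta=\alpha_1\alpha_4-\alpha_2\alpha_3$ is exactly the determinant of the block). Its kernel is the set of automorphisms with $\alpha_1=\alpha_4=1$ and $\alpha_2=\alpha_3=0$, which under composition is the additive group of the pairs $(\alpha_5,\alpha_6)$; thus $\ker\pi\cong(\r^2,+)$ is torsion-free. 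Consequently, for a finite subgroup $\Gamma$ the restriction $\pi|_\Gamma$ is injective (any element of $\Gamma\cap\ker\pi$ would be a torsion element of $\r^2$), so $\Gamma$ is isomorphic to the finite abelian group $\overline\Gamma:=\pi(\Gamma)\subset GL(2,\r)$.

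Since $\overline\Gamma$ is finite it preserves an inner product and is therefore conjugate into $O(2)$; conjugation does not change the isomorphism type, so I may assume $\overline\Gamma\subset O(2)$. The single structural fact I will use is that an element of $O(2)$ is either a rotation $R_\theta\in SO(2)$ or a reflection, and that for every reflection $F$ one has $FR_\theta F=R_{-\theta}$.

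For the first assertion, suppose $k_i\neq0$ for some $i\ge3$, i.e. $\Gamma$ contains an element $\sigma$ of order at least $3$. Then $\overline\sigma=\pi(\sigma)$ has the same order, so it is a rotation $R_\theta$ with $\theta\not\equiv0,\pi\pmod{2\pi}$ (a rotation of order $\ge3$ is neither $I$ nor $-I$). Let $\rho\in\Gamma$ be any involution; then $\overline\rho$ is a nontrivial involution of $O(2)$ commuting with $R_\theta$. A reflection is ruled out, because $FR_\theta F=R_{-\theta}\neq R_\theta$; hence $\overline\rho=-I$. Since $\pi|_\Gamma$ is injective and $-I$ is a single element, $\Gamma$ contains at most one involution, so its $2$-torsion subgroup has order $\le2$ and $k_2\le1$.

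The second assertion then follows formally. If $k_2\ge2$ then $\Gamma$ has more than one involution, so by the contrapositive of the first part $\Gamma$ has no element of order $\ge3$; an abelian group all of whose nonidentity elements are involutions is elementary abelian, whence $\Gamma\cong\z_2^{k_2}$. The step that needs the most care—and the main obstacle—is the reduction to $GL(2,\r)$: one must check that the directions $\alpha_5,\alpha_6$ carry no finite-order information, which is precisely the torsion-freeness of $\ker\pi$, and then invoke the rotation/reflection dichotomy correctly. The same two facts can also be read off directly from the explicit lists of the previous propositions, since each automorphism of order $\ge3$ ($\tau_5,\tau'_5,\tau_6$) has upper-left block of determinant $+1$ representing a genuine rotation, while the involutions $\tau_1,\tau_2,\tau_3$ have block of determinant $-1$ and thus anticommute with those rotations.
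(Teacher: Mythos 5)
Your proof is correct, but it takes a genuinely different route from the paper's. The paper works inside the $3$-dimensional representation: it picks a commuting pair consisting of an involution $\tau$ and an element $\tau'$ of order at least $3$, diagonalizes them simultaneously over $\C$, observes that $X_3$ is a common eigenvector and that the two remaining eigenvalues of $\tau'$ are non-real complex conjugates, so their conjugate eigenlines are $\tau$-invariant and the two remaining eigenvalues of the (real) involution $\tau$ must therefore be equal; by the explicit classification of involutive automorphisms this forces $\tau=\tau_4(\alpha_5,\alpha_6)$, and a direct matrix computation showing that two \emph{distinct} automorphisms of type $\tau_4$ never commute yields $k_2\le 1$. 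You instead quotient by the torsion-free kernel carrying the $(\alpha_5,\alpha_6)$-data, embed $\Gamma$ into $GL(2,\r)$, conjugate into $O(2)$, and invoke the rotation/reflection dichotomy. This buys two things: you never need the Mathematica-assisted classification of involutions (Proposition \ref{lemma1}), and the uniqueness of the involution comes for free from injectivity of $\pi|_\Gamma$ — every involution commuting with a rotation of order $\ge 3$ must map to $-I$ — rather than from the commutation computation for the $\tau_4$ family. In exchange, the paper's argument stays entirely within the explicit matrix families it has already set up and reuses in later sections. One minor slip in your closing aside: the determinant $-1$ involutions do not ``anticommute'' with the rotations; the correct relation, which you do use correctly in the body of the argument, is $F R_\theta F = R_{-\theta}$.
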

\begin{proof}
Assume that there is $i \geq 3$ such that $k_i \geq 1$.  If $k_2 \geq 1$, there exist two automorphisms $\tau$ and $\tau'$ satisfying $\tau'^{2}=\tau^{2}=Id$ and $\tau' \circ \tau=\tau\circ \tau'.$ Thus $\tau'$ and $\tau$ can be reduced simultaneously in the diagonal form and admit a common basis of eigenvectors. As for any $\sigma\in Aut(\h_3)$ we have $\sigma(X_3)=\Delta X_3$, $X_3$ is an eigenvector for $\tau'$ and $\tau$ associated to the eigenvalue $1$ for $\tau'$ and $\pm 1$ for $\tau$. As the two other eigenvalues of $\tau'$ are complex conjugate numbers, the corresponding eigenvectors are complex conjugate. This implies that the eigenvalues of $\tau$ distinguished of $\Delta=\pm 1$ are equal and  from Proposition \ref{lemma1}, $\tau=\tau_4(\alpha_5,\alpha_6)$. But
\[
\tau_4(\alpha_5,\alpha_6)\circ \tau_4(\alpha'_5,\alpha'_6)=\tau_4(\alpha'_5,\alpha'_6)\circ \tau_4(\alpha_5,\alpha_6) \Leftrightarrow \alpha_5=\alpha'_5, \ \alpha_6=\alpha'_6.
\]
\end{proof}

Thus, we have to determine, in a first step, the subgroups $\Gamma $ of $Aut (\mathfrak h_3)$ isomorphic a $(\z_2)^k $ with $k \geq 2$.
\begin{itemize}
\item Any involutive automorphism $\tau$ commuting with $\tau_1 (\alpha_3, \alpha_6) $ and distinct from it is equal to one of the following automorphisms
\[
\tau_2(-\alpha_3,\alpha_5) \qquad \qquad \tau_4(\alpha_5,-\alpha_6)
\]
Indeed, if we set $[\tau,\tau'] = \tau \circ\tau' -\tau' \circ\tau$  then
\begin{eqnarray*}
\left[\tau_1(\alpha_3,\alpha_6),\tau_1(\alpha'_3,\alpha'_6)\right] &=& 0 \qquad \mbox{if and only if} \qquad
\alpha'_3 =\alpha_3 \; \mbox{and} \; \alpha'_6 = \alpha_6
\\
\left[\tau_1(\alpha_3,\alpha_6),\tau_{2}(\alpha'_{3},\alpha'_{5})\right]&=& 0 \qquad \mbox{if and only if} \qquad
\alpha'_3 = - \alpha_3\\
\left[\tau_1(\alpha_3,\alpha_6),\tau_{3}(\alpha_{1},\alpha_{2},\alpha'_{3})\right]&\neq& 0 \qquad \mbox{whatever they are}\qquad  \alpha_{1},\alpha_{2},\alpha'_{3}\\
\left[\tau_1(\alpha_3,\alpha_6),\tau_4(\alpha_5,\alpha'_6)\right] &=& 0 \qquad \mbox{if and only if} \qquad
\alpha'_6 =-\alpha_6
\end{eqnarray*}
These results follow directly from the matrix calculation. In addition we have
 \[
\tau_1(\alpha_3,\alpha_6) \circ \tau_2(-\alpha_3,\alpha_5) = \tau_{4}\left(-\frac{\alpha_{3}\alpha_{6}}{2}-\alpha_{5},-\alpha_{6}\right)
\]
and
\[
\left[\tau_2(-\alpha_3,\alpha_5),\tau_{4}\left(-\frac{\alpha_{3}\alpha_{6}}{2}-\alpha_{5},-\alpha_{6}\right)\right] = 0.
\]
Thus
\[
\Gamma_{7}(\alpha_3,\alpha_5,\alpha_6) = \left\{Id,\tau_1(\alpha_3,\alpha_6),\tau_2(-\alpha_3,\alpha_5),\tau_{4}\left(-\frac{\alpha_{3}\alpha_{6}}{2}-\alpha_{5},-\alpha_{6}\right)\right\}
\]
is a subgroup of $Aut (\mathfrak h_ {3})$ isomorphic to $\z^{2}_{2} $. Moreover it is the only subgroup of $ Aut (\mathfrak h_3) $ of type $ (\z_2)^k $, $ k \geq 2 $, containing an automorphism of type $\tau_1(\alpha_3,\alpha_6)$.
\item Let us suppose that $ \tau_2(\alpha_3,\alpha_5) \in \Gamma$ and that $ \tau_1(\alpha_3,\alpha_6) \notin \Gamma$. We have
\begin{eqnarray*}
\left[\tau_2(\alpha_3,\alpha_5),\tau_2(\alpha'_3,\alpha'_5)\right] &=& 0 \qquad \mbox{if and only if} \qquad
\alpha'_3 =\alpha_3 \; \mbox{and} \; \alpha'_5 = \alpha_5
\\
\left[\tau_2(\alpha_3,\alpha_5),\tau_{3}(\alpha_{1},\alpha_{2},\alpha_{6})\right]&\neq& 0 \qquad \mbox{because by assumption} \qquad
\alpha_2 \neq 0\\
\left[\tau_2(\alpha_3,\alpha_5),\tau_4(\alpha'_5,\alpha_6)\right]&=& 0 \qquad \mbox{if and only if}\qquad  \alpha'_{5}= -\alpha_{5}-\frac{\alpha_{3}\alpha_{6}}{2}
\end{eqnarray*}
But
\[
\tau_2(\alpha_3,\alpha_5) \circ \tau_{4}(-\alpha_{5}-\frac{\alpha_{3}\alpha_{6}}{2},\alpha_{6}) =  \tau_1(\alpha_3,\alpha_6).
\]
Thus every abelian subgroup $\Gamma $ containing $\tau_2 (\alpha_3 \alpha_5)$ are either isomorphic to $\z_2 $ or is equal to $\Gamma_7$
\item Assume that $ \tau_3(\alpha_{1},\alpha_3,\alpha_6) \in \Gamma$. We have
\[
\left[\tau_3(\alpha_{1},\alpha_2,\alpha_6),\tau_3(\alpha'_{1},\alpha'_2,\alpha'_6)\right] = 0 \quad \mbox{if and only if} \quad \alpha'_{1}=-\alpha_{1} \; \mbox{and} \; \alpha'_{2}=-\alpha_{2}.
\]
Thus
\begin{eqnarray*}
\left[\tau_3(\alpha_{1},\alpha_2,\alpha_6),\tau_3(-\alpha_{1},-\alpha_2,\alpha'_6)\right] &=& 0,
\end{eqnarray*}
and
\begin{eqnarray*}
\left[\tau_3(\alpha_{1},\alpha_2,\alpha_6),\tau_4(\alpha_5,\alpha'_6)\right]&=& 0 \quad \mbox{if and only if}\quad  \alpha_{2}\alpha_{5}+ 2 \alpha_{6}= ( \alpha_{1}-1)\alpha'_{6}.
\end{eqnarray*}
Moreover
\[
\tau_3(\alpha_{1},\alpha_2,\alpha_6) \circ \tau_3(-\alpha_{1},-\alpha_2,\alpha'_6) =\tau_4\left(\frac{\alpha'_6(1 - \alpha_{1})-\alpha_{6}(1 + \alpha_{1})}{\alpha_{2}},-\alpha_{6}-\alpha'_6\right)
\]
because
\[
\alpha_{2}\left(\frac{\alpha'_6(1 - \alpha_{1})-\alpha_{6}(1 +\alpha_{1})}{\alpha_{2}}\right) + 2 \alpha_{6} + (1 - \alpha_{1})(-\alpha_{6}-\alpha'_6) = 0.
\]
The subgroup of  $\Gamma_8(\alpha_1,\alpha_2,\alpha_6,\alpha'_6)$ of $Aut(\mathfrak h_{3})$ equal to
\[
\left\{Id,\tau_3(\alpha_{1},\alpha_2,\alpha_6),\tau_3(-\alpha_{1},-\alpha_2,\alpha'_6),
\tau_4\left(\frac{\alpha'_6(1 - \alpha_{1})-\alpha_{6}(1 + \alpha_{1})}{\alpha_{2}},-\alpha_{6}-\alpha'_6\right)\right\}
\]
is isomorphic to  $\z^{2}_{2}$.
\item We suppose that $ \tau_4(\alpha_5,\alpha_6) \in \Gamma$. If $ \Gamma $ is not isomorphic to $ \z_{2} $,  then $\Gamma$ is one of the groups $\Gamma_{7}, \Gamma_{8}$.
\end{itemize}
\begin{teo}
Any finite abelian subgroup $\Gamma$ of  $Aut (\mathfrak h_3)$ isomorphic to $(\z_2)^k$ is one of the following
\begin{enumerate}
\item $k=1$, $\Gamma=\Gamma_1(\alpha_3,\alpha_6), \quad
\Gamma_2(\alpha_3,\alpha_5), \quad
\Gamma_3(\alpha_1,\alpha_2,\alpha_6),  \ \alpha_2 \neq 0, \quad
\Gamma_4(\alpha_5,\alpha_6),$
\item $k=2$, $\Gamma=\Gamma_{7}(\alpha_3,\alpha_5,\alpha_6), \quad \Gamma_8(\alpha_1,\alpha_2,\alpha_6,\alpha'_6).$
    \end{enumerate}
\end{teo}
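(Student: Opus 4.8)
The plan is to reduce everything to a case analysis on the type of a single nontrivial involution, relying on the commutator dictionary assembled in the four bullet points immediately preceding the statement. The case $k=1$ needs no extra work: a group isomorphic to $\z_2$ is generated by one involution, so the Corollary classifying $\z_2$-subgroups already forces it to be one of $\Gamma_1,\Gamma_2,\Gamma_3,\Gamma_4$. Hence all the content lies in the case $k\geq 2$, where I must show that $\Gamma$ can only be $\Gamma_7$ or $\Gamma_8$, and in particular that $k=2$.

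So I would fix $\Gamma\cong(\z_2)^k$ with $k\geq 2$ and pick a nontrivial $\tau\in\Gamma$. Since every nonidentity element of $(\z_2)^k$ is an involution, Proposition \ref{lemma1} says $\tau$ is of type $\tau_1,\tau_2,\tau_3$ or $\tau_4$; and because $\Gamma$ is abelian, every other element of $\Gamma$ is an involution commuting with $\tau$, which is exactly the hypothesis under which the bullet-point computations apply. I then split according to which types occur in $\Gamma$. \emph{If $\Gamma$ contains a $\tau_1$-type element}, the first bullet shows that the only involutions commuting with $\tau_1(\alpha_3,\alpha_6)$ and distinct from it are $\tau_2(-\alpha_3,\alpha_5)$ and $\tau_4(\alpha_5,-\alpha_6)$, and that their product lands again inside $\Gamma_7(\alpha_3,\alpha_5,\alpha_6)$; hence $\Gamma=\Gamma_7$. \emph{If $\Gamma$ contains a $\tau_3$-type element}, then by the earlier bullets $\tau_3$ commutes with no $\tau_1$-type and, since $\alpha_2\neq 0$, with no $\tau_2$-type involution, so the third bullet leaves only the partners $\tau_3(-\alpha_1,-\alpha_2,\alpha'_6)$ and a uniquely determined $\tau_4$, whose products close up into $\Gamma_8$; hence $\Gamma=\Gamma_8$. \emph{If neither a $\tau_1$- nor a $\tau_3$-type element occurs}, every nontrivial element is of type $\tau_2$ or $\tau_4$: a $\tau_2$-type element is impossible, because the second bullet forces $\Gamma$ to be $\z_2$ or $\Gamma_7$, and $\Gamma_7$ contains a $\tau_1$-element that is excluded here, so $\Gamma\cong\z_2$ against $k\geq2$; and if every nontrivial element were of type $\tau_4$, the relation $\tau_4(\alpha_5,\alpha_6)\circ\tau_4(\alpha'_5,\alpha'_6)=\tau_4(\alpha'_5,\alpha'_6)\circ\tau_4(\alpha_5,\alpha_6)$ if and only if $(\alpha_5,\alpha_6)=(\alpha'_5,\alpha'_6)$ from the proof of the Lemma shows two distinct $\tau_4$'s never commute, so $\Gamma$ again collapses to $\z_2$, a contradiction.

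Consequently, for $k\geq 2$ the group is $\Gamma_7$ or $\Gamma_8$, each of order $4$ and isomorphic to $(\z_2)^2$; in particular $k=2$, so no $(\z_2)^k$ with $k\geq 3$ embeds in $Aut(\h_3)$, which completes the classification. I expect the main obstacle not to be this synthesis but the verification of the commutator dictionary it rests on: one must confirm by direct matrix multiplication that $\tau_1$ commutes with no $\tau_3$ and with $\tau_2,\tau_4$ only for the stated parameter values, and, most delicately, that the products $\tau_3\circ\tau_3$ and $\tau_3\circ\tau_4$ genuinely close up inside $\Gamma_8$, where the $\alpha_2$-denominators make the algebra heaviest. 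Care is also needed to check that the four types of Proposition \ref{lemma1} really exhaust every nontrivial element and that the ``$\tau_4$ only'' configuration cannot sneak in as a larger abelian $2$-group.
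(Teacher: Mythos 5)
Your proposal is correct and takes essentially the same route as the paper: the paper's proof of this theorem is precisely the four bullet-point commutator computations you cite, organized (as you do) into a case analysis on which involution types from Proposition \ref{lemma1} occur in $\Gamma$, with $\Gamma_7$ forced when a $\tau_1$ is present, $\Gamma_8$ when a $\tau_3$ is present, and collapse to $\z_2$ otherwise. Your only additions are to make explicit the ``$\tau_4$-only'' collapse via the non-commutation of distinct $\tau_4$'s and the conclusion $k=2$ from $|\Gamma_7|=|\Gamma_8|=4$, both of which the paper leaves implicit.
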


\medskip

Let us assume now that $\Gamma$ is isomorphic to $\z_3^{k_3}$ with $k_3 \geq 2$. If $\tau \in \Gamma_5$, its matricial representation is
\[
\begin{pmatrix}
\displaystyle\frac{-1 - \sqrt{-3 - 4 \alpha_{2} \alpha_{3}}}{2}&\alpha_2&0\\
\alpha_3&\displaystyle\frac{-1 + \displaystyle\sqrt{-3 - 4 \alpha_{2} \alpha_{3}}}{2}&0\\
\alpha_5&\alpha_6&1
\end{pmatrix}.
\]
To simplify, we put $\lambda=\displaystyle\frac{-1 - \sqrt{-3 - 4 \alpha_{2} \alpha_{3}}}{2}$.  The eigenvalues of $\tau$ are $1,j,j^2$ and the corresponding eigenvectors $X_3,V,\overline{V}$ with
\[
V=(1,-\displaystyle\frac{\lambda-j}{\alpha_2},-\displaystyle\frac{\alpha_5}{1-j}+
\frac{\alpha_6(\lambda-j)}{\alpha_2(1-j)})
\]
if $\alpha_2 \neq 0$. If $\tau'$ is an automorphism of order $3$ commuting with $\tau$, then
\[
\tau' V=jV \quad {\mbox or} \quad  j^2V.
\]
But the two first components of $\tau'(V)$ are
\[
 \lambda' - \displaystyle\frac{\beta_2}{\alpha_2}(\lambda-j), \beta_3-\frac{\lambda'(\lambda-j)}{\alpha_2}
\]
where $\beta_i$ and $\lambda'$ are the corresponding coefficients of the matrix of $\tau'$. This implies
\[
\alpha_2\lambda'-\beta_2(\lambda-j)=\alpha_2j \ {\mbox{\rm or}} \ \alpha_2j^2.
\]
Considering the real and complex parts of this equation, we obtain
\[
\left\{
\begin{array}{l}
\alpha_2\lambda'-\beta_2\lambda=0, \\
\beta_2j=\alpha_2j \quad \mbox{or} \quad \alpha_2j^2.
\end{array}
\right.
\]
As $\alpha_2 \neq 0$, we obtain $\alpha_2=\beta_2$ and $\lambda=\lambda'$. Let us compare the second component of $\tau'(V)$.We obtain
\[
\beta_3\alpha_2-\lambda'(\lambda-j)=-(\lambda-j)j \quad \mbox{or} \quad  -(\lambda-j)j^2.
\]
As $\lambda=\lambda'$, we have in the first case $2\lambda j= j^2$ and in the second case $2\lambda j= j^3=1.$ In any case, this is impossible. Thus $\alpha_2=0$ and, from section 2.2, $\tau = Id$. This implies that $k_3=1$ or $0$.
\begin{teo}
Let $\Gamma$ be a finite abelian subgroup of $Aut(\h_3)$. Thus $\Gamma$ is isomorphic to one of the following
\begin{enumerate}
\item $\z_2 \times \z_2$,
\item $\z_2^{k_2} \times \z_3^{k_3} \times \cdots \times \z_p^{k_p}$ with $k_i=0 \quad \mbox{or} \quad  1$ for $i=2, \cdots, p$.
    \end{enumerate}
    \end{teo}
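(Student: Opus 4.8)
The plan is to feed the fundamental theorem of finite abelian groups into the three structural results already at our disposal: the classification of the subgroups isomorphic to $(\z_2)^k$, the Lemma controlling the interaction between the $2$-torsion and the higher torsion, and the computation that excludes $\z_3^{k_3}$ with $k_3\ge 2$. Writing $\Gamma$ in its cyclic decomposition $\z_2^{k_2}\times\z_3^{k_3}\times\cdots\times\z_p^{k_p}$, I would split the discussion according to the value of $k_2$, the two bullets of the Lemma together ensuring that a factor $\z_2^{k_2}$ with $k_2\ge 2$ can occur only in the absence of higher torsion.

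If $k_2\ge 2$, the second assertion of the Lemma gives at once $\Gamma\cong\z_2^{k_2}$, and the Theorem describing the finite abelian subgroups of type $(\z_2)^k$ forces $k_2\le 2$. Hence $k_2=2$ and $\Gamma\cong\z_2\times\z_2$, which is case (1).

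If $k_2\le 1$, the $2$-part already contributes at most one factor, so the only thing left is to prove that $k_i\le 1$ for every $i\ge 3$; this is the heart of the argument and the step I expect to be the main obstacle. The value $i=3$ is precisely the computation made just before the statement: two commuting order-$3$ automorphisms share the complex eigenvector $V$ attached to the eigenvalue $j$, and comparing the first two components of their images forces $\alpha_2=0$, whence $\tau=\mathrm{Id}$ and $k_3\le 1$. I would run the same analysis for an arbitrary $i\ge 3$. By Sections 2.2 and 2.3 any automorphism whose order is a power of the relevant prime has three distinct eigenvalues $1,\mu,\overline\mu$ with $\mu$ not real, so commuting ones leave invariant the common eigenline $\C V$; they therefore act on $V$ through characters valued in the roots of unity, and an element in the kernel of such a character fixes $V$, $\overline V$ and $X_3$, hence acts trivially on all of $\h_3$. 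Since the automorphisms fixing $\h_3/[\h_3,\h_3]$ pointwise form a torsion-free group, that kernel is trivial, the $i$-primary part embeds into the roots of unity and is cyclic, and $k_i\le 1$. This places $\Gamma$ in case (2).

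A conceptual shortcut that settles the whole statement at once, and which I would keep as a cross-check, is the projection $\pi\colon Aut(\h_3)\to GL_2(\r)$ sending an automorphism to its block $\left(\begin{smallmatrix}\alpha_1&\alpha_2\\\alpha_3&\alpha_4\end{smallmatrix}\right)$, i.e. to its action on $\h_3/[\h_3,\h_3]$. Its kernel consists of the maps fixing $X_1$ and $X_2$ modulo the center and is torsion-free, so $\pi$ is injective on the finite group $\Gamma$; thus $\Gamma\cong\pi(\Gamma)$ is a finite abelian subgroup of $GL_2(\r)$, hence conjugate into $O(2)$, and every finite abelian subgroup of $O(2)$ is cyclic or isomorphic to $\z_2\times\z_2$. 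This recovers exactly the two cases and explains why no other isomorphism type can occur.
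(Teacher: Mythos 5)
Your main argument retraces the paper's own proof: the same cyclic decomposition, the same use of the Lemma plus the $(\z_2)^k$-classification to force $\Gamma\cong\z_2\times\z_2$ when $k_2\ge 2$, and the same strategy for the remaining case, namely generalizing the commuting order-$3$ computation to arbitrary $i\ge 3$ (the paper disposes of this step with the single sentence ``we show identically to the case $i=3$ that $k_i=1$ as soon as $k_i\neq 0$''). Your eigenline/character formulation of that step is cleaner and more rigorous than the paper's coordinate manipulation, but it is the same idea. One imprecision to repair there: the claim that every automorphism of prime-power order has spectrum $\{1,\mu,\overline\mu\}$ with $\mu$ non-real fails for the prime $2$ (involutions such as $\tau_4$ have real spectrum), so when $i$ is a power of $2$ --- e.g.\ to exclude $\z_4\times\z_4$ --- you should invoke it only for the chosen generator $\tau$ of order $\ge 4$, whose eigenvalues are indeed $1,\pm i$-type; for the remaining elements of the character's kernel your torsion-freeness step is then exactly what closes the argument (an element fixing $V$ and $\overline V$ acts trivially on $\h_3/[\h_3,\h_3]$, hence lies in a torsion-free group and is the identity), so that appeal is not redundant and should be kept. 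The genuinely different contribution is your final shortcut: since the kernel of $Aut(\h_3)\to GL(\h_3/[\h_3,\h_3])\cong GL_2(\r)$ is isomorphic to $\r^2$, hence torsion-free, any finite abelian $\Gamma$ embeds into $GL_2(\r)$, is conjugate into $O(2)$, and is therefore cyclic or $\z_2\times\z_2$. This one-paragraph argument bypasses the entire matrix case analysis of Section 2 and proves a statement strictly sharper than the theorem: case (2) as written does not even exclude non-realizable isomorphism types such as $\z_2\times\z_4$, whereas ``cyclic or Klein four'' does. What the shortcut does not provide is the explicit parametrized list of subgroups $\Gamma_1,\dots,\Gamma_8$, and that list --- not merely the isomorphism types --- is what the paper actually uses afterwards to classify the $\z_2^2$-gradings and the adapted metrics; that is the real payoff of the computational route.
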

To prove the second part, we show identically to the case $i=3$ that $k_i=1$ as soon as $k_i \neq 0$.

\medskip

\noindent{\bf Example.} The group
\[
\Gamma_4(0,0) \times \Gamma_5(0,0,0,0) \times \cdots \times \Gamma_{6,k}(0,0,0,0)
\]
satisfies the second property of the theorem.

\noindent{\bf Remark.} 
We have determined the finite abelian subgroups of $Aut(\mathfrak h_3)$. There are non-abelian finite subgroups  with elements of order at most $3$. Take for example the subgroup generated by
\[
\sigma_{1}=\begin{pmatrix}
\-1&0&0\\
0&1&0\\
0&0&-1
\end{pmatrix},
\qquad
\sigma_{2} = \begin{pmatrix}
-\frac{1}{2}&\alpha&0\\
-\frac{3}{4\alpha}&-\frac{1}{2}&0\\
0&0&1
\end{pmatrix}  \qquad \alpha \neq 0
\]
The relations on the generators are
\[
\begin{cases}
\sigma^{2}_{1} = Id,\\
\sigma^{3}_{2} = Id,\\
\sigma_{1}\sigma_{2}\sigma_{1} = \sigma^{2}_{2}.
\end{cases}
\]
Thus the group generated by $\sigma_1$ and $\sigma_2$ is isomorphic to the symmetric group $\Sigma_3$ of degree  $3$.

\section{$\Gamma$-grading of $\mathfrak h_{3}$}

\subsection{Description of the $\z_2$ and $\z_2^2$-gradings}
Let $\Gamma$ be a finite abelian subgroup of $Aut(\h_3)$. We consider a $\Gamma $-grading of $\mathfrak h_3$
\[
\mathfrak h_3= \underset{\gamma \in \Gamma}{\oplus}\; \mathfrak{h}_{3,\gamma}
\]
such that $\mathfrak h_ {3, e} = \{0 \}$  where $ e $ is the identity of $\Gamma$. In this case, the space $\Gamma $-symmetric associated with this grading is isomorphic to the Heisenberg group $\mathbb{H}_3$ and then $\mathbb{H}_3$ can be studied in terms of  $\Gamma $-symmetric spaces. In this section, we are particulary interested by the case $\Gamma = \z_2$ or $\Gamma=\z_2 \times \z_2.$

\medskip

$\bullet$ If $\Gamma = \z_2$ then the grading of  $\mathfrak h_{3}$  is of the type
\[
\mathfrak h_{3} = \mathfrak g_{0} \oplus \mathfrak g_{1}
\]
with $\mathfrak g_{0} \neq \{0\}$. In this case the corresponding symmetric homogeneous space is isomorphic to  $\mathbb{H}_3/H$ where $H$ is a non trivial Lie subgroup of  $\mathbb{H}_3$ whose Lie algebra is $\g_0$.
The group $\mathbb{H}_3$ is not provided with a symmetric space structure.

\medskip

$\bullet$ If  $\Gamma = \z^{2}_2$ then  $\Gamma = \Gamma_7$ or  $\Gamma = \Gamma_8$. Recall that
\[
\Gamma_{7} = \left\{Id,\tau_1(\alpha_3,\alpha_6),\tau_2(-\alpha_3,\alpha_5),\tau_{4}\left(-\frac{\alpha_{3}\alpha_{6}}{2}-\alpha_{5},-\alpha_{6}\right)\right\}
\]
Denote by $L(V_1,\cdots,V_k)$ the real vector space generated by the vectors $V_1,\cdots,V_k$. Recall that each vector of the Heisenberg algebra is decomposed in the basis $\{X_1,X_2,X_3\}$.
The eigenspaces associated with $\tau_1(\alpha_3,\alpha_6)$ are
\begin{eqnarray*}
V_{1} &=& L\left((0,1,\frac{\alpha_{6}}{2})\right) \\
V_{-1} &=&  L\left((1,- \frac{\alpha_{3}}{2},0) ,(0,0,1)\right)
\end{eqnarray*}
The eigenspaces associated to  $\tau_2(-\alpha_3,\alpha_5)$ are
\begin{eqnarray*}
W_{1} &=& L\left((1,-\frac{\alpha_{3}}{2},\frac{\alpha_{5}}{2})\right) \\
W_{-1} &=&  L\left((0,1,0),(0,0,1)\right).
\end{eqnarray*}
Since $\tau_{4} = \tau_{1}\circ\tau_{2}$, the grading of $\mathfrak h_{3}$ associated with $\Gamma_{7}$ is
\begin{eqnarray*}
\mathfrak h_{3} &=&V_{1}\cap W_{1}\oplus V_{1}\cap W_{-1}\oplus V_{-1}\cap W_{1}\oplus V_{-1}\cap W_{-1} \\
&=& \{0\}\oplus \r\{(0,1,\frac{\alpha_{6}}{2})\}\oplus \r\{(1,-\frac{\alpha_{3}}{2},\frac{\alpha_{5}}{2})\}\oplus \r\{(0,0,1)\}.
\end{eqnarray*}
\medskip

Now consider the case where $\Gamma = \Gamma_8$
\[
\Gamma_{8} =\left\{Id,\tau_3(\alpha_{1},\alpha_2,\alpha_6),\tau_3(\alpha'_{1},\alpha'_2,\alpha'_6),
\tau_4\left(\frac{\alpha'_6(1 - \alpha_{1})-\alpha_{6}(1 + \alpha_{1})}{\alpha_{2}},-\alpha_{6}-\alpha'_6\right)\right\}
\]
The eigenspaces associated with  $\tau_3^1$ are
\begin{eqnarray*}
V_{1} &=& L\left((1,\frac{1-\alpha_{1}}{\alpha_{2}},\frac{\alpha_{6}}{\alpha_{2}})\right) \\
V_{-1} &=&  L\left((1,- \frac{1+\alpha_{1}}{\alpha_{2}},0) ,(0,0,1)\right)
\end{eqnarray*}
The eigenspaces associated with  $\tau_3^2$ are
\begin{eqnarray*}
W_{1} &=& L\left((1,-\frac{1+\alpha_{1}}{\alpha_{2}},\frac{\alpha'_{6}}{\alpha_{2}})\right) \\
W_{-1} &=&  L\left((1,\frac{1-\alpha_{1}}{\alpha_{2}},0),(0,0,1)\right).
\end{eqnarray*}
The grading associated with $\Gamma_8$ is therefore
\begin{eqnarray*}
\mathfrak h_{3} &=&V_{1}\cap W_{1}\oplus V_{1}\cap W_{-1}\oplus V_{-1}\cap W_{1}\oplus V_{-1}\cap W_{-1} \\
&=& \{0\}\oplus \r\{(1,\frac{1-\alpha_{1}}{\alpha_{2}},0)\}\oplus \r\{(1,- \frac{1+\alpha_{1}}{\alpha_{2}},0)\}\oplus \r\{(0,0,1)\}.
\end{eqnarray*}

\begin{prop}
The $\z^{2}_{2}$-grading of $\mathfrak h_{3}$ correspond to one of the following:
\begin{eqnarray*}
\mathfrak h_{3} &=&
\{0\}\oplus \r\{(X_{2}+\frac{\alpha_{6}}{2}X_{3}\}\oplus
 \r\{(X_{1}- \frac{\alpha_{3}}{2}X_{2}+\frac{\alpha_{5}}{2}X_{3})\}\oplus \r\{X_{3}\}\\
 & &\\
\mathfrak h_{3}
&=& \{0\}\oplus \r\{(X_{1}+\frac{1-\alpha_{1}}{\alpha_{2}}X_{2}\}\oplus
 \r\{(X_{1}- \frac{1+\alpha_{1}}{\alpha_{2}}X_{2})\}\oplus \r\{X_{3}\}
\end{eqnarray*}
\end{prop}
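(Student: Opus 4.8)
The plan is to exploit the standard correspondence between $\z_2^2$-gradings and simultaneous eigenspace decompositions under the commuting involutions generating the group, and then to read off the two families from the classification already established. First I would recall that, for a finite abelian group $\Gamma$, a $\Gamma$-grading of $\h_3$ induced by an injective $\rho:\Gamma\to Aut(\h_3)$ is exactly the decomposition into common eigenspaces
\[
\h_{3,\chi}=\left\{X\in\h_3 : \rho(\gamma)(X)=\chi(\gamma)X,\ \forall\gamma\in\Gamma\right\},
\]
indexed by the characters $\chi$ of $\Gamma\cong\z_2^2$. The grading condition $[\h_{3,\chi},\h_{3,\chi'}]\subset\h_{3,\chi\chi'}$ is automatic: each $\rho(\gamma)$ being a Lie algebra automorphism, eigenvectors $X,Y$ with eigenvalues $\lambda,\mu$ satisfy $\rho(\gamma)[X,Y]=[\rho(\gamma)X,\rho(\gamma)Y]=\lambda\mu[X,Y]$.

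By the classification theorem the only subgroups of $Aut(\h_3)$ isomorphic to $\z_2^2$ are $\Gamma_7(\alpha_3,\alpha_5,\alpha_6)$ and $\Gamma_8(\alpha_1,\alpha_2,\alpha_6,\alpha'_6)$, so there are precisely two families to treat. For each I would pick two generating involutions --- $\tau_1$ and $\tau_2$ for $\Gamma_7$ (with $\tau_4=\tau_1\circ\tau_2$), and $\tau_3^1$ and $\tau_3^2$ for $\Gamma_8$ --- diagonalize them over $\r$ (possible since they commute and have eigenvalues $\pm1$), and intersect the $\pm1$-eigenspaces $V_{\pm1}$ and $W_{\pm1}$ pairwise to obtain the four graded pieces $V_{\varepsilon}\cap W_{\varepsilon'}$, $\varepsilon,\varepsilon'\in\{\pm1\}$. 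The eigenvectors needed are precisely those exhibited in the lines preceding the statement.

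A dimension count then fixes the shape without further effort: each involution splits $\h_3$ into a line and a plane. Since $X_3$ spans the centre and is an eigenvector of every automorphism with eigenvalue $\Delta$, and since the two chosen generators both have $\Delta=-1$, the vector $X_3$ lies in the intersection $V_{-1}\cap W_{-1}$ of the two planes, which one checks collapses to $\r\{X_3\}$. Consequently the four intersections have dimensions $0,1,1,1$ with sum $3$; the unique zero piece is the component of the neutral character, in agreement with the standing hypothesis $\h_{3,e}=\{0\}$. Reading the three surviving lines in the basis $\{X_1,X_2,X_3\}$ yields exactly the two decompositions in the statement.

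The step I expect to require the most care is not conceptual but bookkeeping: matching each common eigenspace to the correct character of $\z_2^2$ so that the neutral component is the one that vanishes, and simplifying the intersection of the two planes to confirm it reduces to $\r\{X_3\}$ without acquiring spurious dimension. Everything else is linear algebra already carried out in the computation of the eigenspaces above.
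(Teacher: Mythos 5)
Your proposal is correct and follows essentially the same route as the paper: it invokes the classification of the $\z_2^2$-subgroups of $Aut(\h_3)$ as $\Gamma_7$ or $\Gamma_8$, then decomposes $\h_3$ into the pairwise intersections $V_{\pm 1}\cap W_{\pm 1}$ of the eigenspaces of two generating involutions, with the neutral piece $V_1\cap W_1=\{0\}$ and $\r\{X_3\}=V_{-1}\cap W_{-1}$ since both generators have $\Delta=-1$. The paper carries out exactly this computation, listing the eigenvectors of $\tau_1,\tau_2$ (for $\Gamma_7$) and of $\tau_3^1,\tau_3^2$ (for $\Gamma_8$) in the lines preceding the proposition.
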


\medskip

\noindent{\bf Remark. } If $\Gamma = \z_{3}$, we consider the complexification $\mathfrak h_ {3, \C}$ of the Heisenberg  algebra. We still denote by $X_1, X_2, X_{3}$ the complex basis of $\mathfrak h_{3, \C}$ corresponding to the given basis of $\mathfrak h_3$. The grading in this case is defined by the complex eigenspaces of $\tau_5$. They are
\begin{eqnarray*}
V_{1} &=& \C\{(0,0,1)\}\\
V_{j}&=& \C \{(1,\frac{1 + 2 j + \sqrt{-3 - 4 \alpha_{2}\alpha_{3}}}{2\alpha_{2}},0)\}\\
V_{\bar{j}}&=&\C \{(1,\frac{1 + 2 \bar{j} + \sqrt{-3 - 4 \alpha_{2}\alpha_{3}}}{2\alpha_{2}},0)\}
\end{eqnarray*}
We have the grading
\[
\mathfrak h_{3,\C} = V_{1}\oplus  V_{j}\oplus V_{\bar{j}}
\]

\subsection{Classification of  $\z_2^2$-grading up an automorphism}
\begin{lem}
There is an automorphism $\sigma \in Aut(\mathfrak h_{3})$ such that
\[
\sigma^{-1} \Gamma_{7} \sigma = \Gamma_{8}
\]
\end{lem}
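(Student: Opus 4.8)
The plan is to translate the group-conjugacy statement into a statement about the associated gradings, and then realize the required change of coordinates by an explicit automorphism. Conjugation transforms gradings covariantly: if $v$ is a common eigenvector of all elements of $\Gamma_7$ with a given system of eigenvalues, then $\sigma^{-1}(v)$ is a common eigenvector of $\sigma^{-1}\Gamma_7\sigma$ with the same eigenvalues, so the grading subspaces of $\sigma^{-1}\Gamma_7\sigma$ are exactly the $\sigma^{-1}$-images of those of $\Gamma_7$. Since, by the Proposition classifying the $\z_2^2$-gradings, such a subgroup is recovered from its grading (the three nontrivial involutions are the unique involutive automorphisms realizing the prescribed $\pm1$-patterns on the three grading lines), it suffices to produce an automorphism $\sigma$ carrying the three nonzero grading lines of $\Gamma_8$ onto those of $\Gamma_7$.

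Both gradings have the same shape: the $e$-component is $\{0\}$, the center $\r X_3$ is one of the three lines, and the remaining two lines are transverse to the center and bracket into it. First I would work modulo the center. Every $\tau\in Aut(\h_3)$ preserves $\r X_3=[\h_3,\h_3]$ and induces on $\h_3/\r X_3\cong\r^2$ the linear map $A=\begin{pmatrix}\alpha_1&\alpha_2\\\alpha_3&\alpha_4\end{pmatrix}\in GL_2(\r)$. The two non-central lines of $\Gamma_8$ project to the distinct lines spanned by $\bar X_1+\frac{1-\alpha_1}{\alpha_2}\bar X_2$ and $\bar X_1-\frac{1+\alpha_1}{\alpha_2}\bar X_2$ (distinct since $1\neq-1$), while those of $\Gamma_7$ project to $\bar X_2$ and $\bar X_1-\frac{\alpha_3}{2}\bar X_2$, again distinct. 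As $GL_2(\r)$ acts transitively on ordered pairs of distinct lines of $\r^2$, I can choose $A$ mapping the first pair to the second.

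With $A$ fixed, $\Delta=\det A$ is determined, and it remains to adjust the bottom row $(\alpha_5,\alpha_6)$ so that the actual lines — not merely their projections — correspond. Each non-central line of $\Gamma_8$ has the form $v=pX_1+qX_2$ with zero $X_3$-component, so $\sigma(v)$ has $X_3$-component $\alpha_5 p+\alpha_6 q$; requiring $\sigma(v)$ to be proportional to the matching $\Gamma_7$-line gives one linear equation in $(\alpha_5,\alpha_6)$ for each of the two lines. The coefficient matrix of this $2\times2$ system has rows equal to the $(X_1,X_2)$-parts of the two $\Gamma_8$-lines, which are linearly independent, so the system has a unique solution. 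Any matrix of the form $\begin{pmatrix}A&0\\ \ast&\Delta\end{pmatrix}$ with $\Delta=\det A\neq0$ is an automorphism of $\h_3$, so the resulting $\sigma$ lies in $Aut(\h_3)$.

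Finally, for this $\sigma$ the grading of $\sigma^{-1}\Gamma_7\sigma$ consists of exactly the three lines of the $\Gamma_8$-grading, whence $\sigma^{-1}\Gamma_7\sigma=\Gamma_8$. The step I expect to require the most care is this last identification: one must check that matching the grading lines — together with the center and the common bracket relation $[\ell,\ell']=\r X_3$ — forces the same labeling by the nontrivial characters of $\z_2^2$, and hence the same three involutions, rather than merely two $\z_2^2$-subgroups sharing the same eigenline configuration. This is precisely where the bracket-compatibility of the grading and the fact that each involution is pinned down by its eigenvalue pattern on the three lines must be invoked.
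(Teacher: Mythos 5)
Your argument is correct, and it takes a genuinely different route from the paper's. The paper proves the lemma by explicit construction: it writes down a conjugating automorphism $\sigma$ in closed form, split into three cases ($\alpha_1^2\neq 1$, $\alpha_1=1$, $\alpha_1=-1$), with entries evidently produced and checked by matrix computation, and no structural reason is offered. You instead reduce conjugacy of the groups to matching the three grading lines, which you achieve modulo the center (transitivity of $GL_2(\r)$ on ordered pairs of distinct lines) and then lift by solving a $2\times 2$ linear system with invertible coefficient matrix for the bottom row $(\alpha_5,\alpha_6)$; this lift is legitimate because the general element of $Aut(\h_3)$ is exactly a matrix with invertible upper-left $2\times 2$ block $A$, zero upper-right column, and lower-right entry $\det A$. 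Your approach eliminates the case analysis and the computer algebra, and it proves somewhat more: any two $\z_2^2$-subgroups of $Aut(\h_3)$ whose associated gradings have $e$-component $\{0\}$ are conjugate. What the paper's computation buys in exchange is an explicit $\sigma$ as a function of the parameters of $\Gamma_7$ and $\Gamma_8$, in the same explicit spirit as the subsequent lemma normalizing $\Gamma_7(\alpha_3,\alpha_5,\alpha_6)$ to $\Gamma_7(0,0,0)$.

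The step you flag at the end does close, by exactly the bracket argument you point to. Let $M_1,M_2$ be the two non-central grading lines and $M_3=\r X_3$, and let $\tau\in Aut(\h_3)$ preserve each $M_i$, acting on it by $\epsilon_i=\pm 1$. Since $M_1\oplus M_2\oplus M_3=\h_3$, generators $v_1,v_2$ of $M_1,M_2$ are independent modulo the center, so $[v_1,v_2]=cX_3$ with $c\neq 0$; applying $\tau$ to this relation gives $\epsilon_3=\epsilon_1\epsilon_2$. Thus any subgroup of $Aut(\h_3)$ acting diagonally on $(M_1,M_2,M_3)$ embeds faithfully (the $M_i$ span) into the order-four group $\{(\epsilon_1,\epsilon_2,\epsilon_1\epsilon_2)\}\subset\{\pm 1\}^3$, so a $\z_2^2$-subgroup realizes every such sign pattern; and since a diagonal map is determined by its signs, $\sigma^{-1}\Gamma_7\sigma$ and $\Gamma_8$ consist of the same four linear maps. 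One caveat: the zero third components you quote for the $\Gamma_8$-lines come from the paper's Proposition, but they are not correct as stated there --- one has $V_1\cap W_{-1}=V_1$, whose generator $(1,\frac{1-\alpha_1}{\alpha_2},\frac{\alpha_6}{\alpha_2})$ has nonzero third component in general, and similarly for $V_{-1}\cap W_1$. This is immaterial for your proof: nonzero third components $r_i$ only add constants $-\Delta r_i$ to the right-hand side of your linear system, whose coefficient matrix, built from the projections $(p_i,q_i)$, is unchanged and invertible.
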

\begin{proof}
Denote by $(\alpha_{3},\alpha_{5},\alpha_{6})$ the parameters of the family $\Gamma_{7}$
and by  $(\alpha_{1},\alpha_{2},\alpha'_{6},\alpha''_{6})$ those of $\Gamma_{8}$. If $\alpha_{1}^{2} \neq 1$, then the automorphism
\[
\sigma =
\begin{pmatrix}
\gamma&\displaystyle\frac{\gamma \alpha_{2}}{ \alpha_{1}-1}&0\\
\delta&-\displaystyle\frac{\alpha_{2} (\gamma \alpha_{3}+\delta-\alpha_{1} \delta)}{-1+ \alpha_{1}^2}&0\\
\rho&
\mu&-\displaystyle\frac{\gamma \alpha_{2} (\gamma  \alpha_{3}+2 \delta)}{-1+ \alpha_{1}^2}
\end{pmatrix}
\]
with
\begin{eqnarray*}
\rho &=&\frac{
(2 \gamma \alpha_5  + \gamma \alpha_3 \alpha_6  + 2 \alpha_6 \delta )}{4}\\
&+&
\frac{(2 \gamma^2 \alpha_3 a'_6 + 4 \gamma \delta a'_6) (1 + \alpha_1) + (2 \gamma^2 \alpha_3
a''_6  + 4 \gamma \delta a''_6)(\alpha_1-1)}{4(\alpha_{1}^2 -1)}
\\
& &
\\
\mu&=& \frac{2  \gamma \alpha_2 \alpha_5 (1 + \alpha_1) +   \alpha_2 \alpha_6 (\gamma  \alpha_3 +
 2\delta) (\alpha_1-1) + (2  \gamma^2 \alpha_2 \alpha_3 + 4  \gamma \alpha_2 \delta) ( a'_6 + a''_6)}{4(\alpha_{1}^2 -1)}
 \end{eqnarray*}
answers to the question. \\
If $\alpha_{1} = 1$, we consider
\[
\sigma =
\begin{pmatrix}
0&\beta&0\\
\gamma&\displaystyle\frac{-\beta \alpha_{3} + \alpha_{2} \gamma}{2}&0\\
\gamma \left(\displaystyle\frac{\alpha_{6}}{2}+\displaystyle\frac{\beta \alpha'_{6}}{\alpha_{2}}\right)&
\displaystyle\frac{\alpha_{2} \gamma \alpha_{6}+2 \beta (\alpha_{5}+\gamma \alpha'_{6}+\gamma \alpha''_{6})}{4}&
-\beta \gamma
\end{pmatrix}
\]
and 
if $\alpha_{1} = -1$, we take
\[
\sigma =
\begin{pmatrix}
-\displaystyle\frac{2 \beta}{\alpha_{2}}&\beta &0\\
\displaystyle\frac{\beta \alpha_{3}}{\alpha_{2}}&\delta&0\\
\frac{-\alpha_{2} \beta \alpha_{5}-(\beta^2 \alpha_{3} +)\alpha''_{6}}{\alpha_{2}^2}&
\frac{2 \alpha_{2} \beta (\alpha_{5}+ \alpha_{3} \alpha_{6})+
(2 \beta^2 \alpha_{3} + 4\beta \delta )(\alpha'_{6}+\alpha''_{6})+
2 \alpha_{2} \delta \alpha_{6}}{4 \alpha_{2}}&
-\displaystyle\frac{\beta ^2 \alpha_{3} +2 \beta \delta}{\alpha_{2}}
\end{pmatrix}
\]
These automorphisms give an equivalence between the two subgroups.
\end{proof}
\medskip

\noindent {\bf Consequence} Let $\mathfrak h_3 = \{0\}\oplus \mathfrak h_{3,a_1}\oplus \mathfrak h_{3,a_2}  \oplus \mathfrak h_{3,a_3} = \{0\}\oplus \mathfrak h'_{3,a_1}\oplus \mathfrak h'_{3,a_2}  \oplus \mathfrak h'_{3,a_3}$ be the two  $\z^2_2$-gradings de $\mathfrak h_3$, where $\{0,a_1,a_2,a_3\}$ are the elements of  $\z^2_2$.
There exists $\sigma\in Aut(\mathfrak h_3)$ such that
\[
\mathfrak h'_{3,a_i} = \sigma (\mathfrak h_{3,a_i})
\]
Thus, these grading are equivalent. (The equivalence of two grading is defined in \cite{[B.G]}).

\begin{lem}
There exists $\sigma\in Aut(\mathfrak h_3)$ such that 
\[
\left\{
\begin{array}{l}
 \sigma^{-1} \tau_{1}(\alpha_{3},\alpha_{6}) \sigma = \tau_{1}(0,0), \\
 \sigma^{-1} \tau_{2}(-\alpha_{3},\alpha_{5}) \sigma = \tau_{2}(0,0).
 \end {array}
 \right.
 \]
\end{lem}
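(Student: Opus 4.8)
The plan is to exhibit $\sigma$ explicitly, guided by matching the two $\z_2^2$-gradings componentwise. Recall that for $\Gamma_7(\alpha_3,\alpha_5,\alpha_6)$ the three nonzero grading components are the lines $\r\{(0,1,\tfrac{\alpha_6}{2})\}$, $\r\{(1,-\tfrac{\alpha_3}{2},\tfrac{\alpha_5}{2})\}$ and $\r\{(0,0,1)\}$, and that each is the common eigenline of $\tau_1(\alpha_3,\alpha_6)$ and $\tau_2(-\alpha_3,\alpha_5)$ carrying a definite eigenvalue pair: $(+1,-1)$, $(-1,+1)$ and $(-1,-1)$ respectively. The corresponding data for the parameters $(0,0,0)$ give the standard lines $\r X_2$, $\r X_1$, $\r X_3$ with the \emph{same} eigenvalue assignment. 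Since conjugation carries eigenvectors to eigenvectors while preserving eigenvalues, any $\sigma$ sending each standard eigenline onto the matching eigenline of $\Gamma_7(\alpha_3,\alpha_5,\alpha_6)$ will automatically intertwine the generators.

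Concretely, I would take $\sigma$ to be the linear map determined by
\[
\sigma(X_1)=(1,-\tfrac{\alpha_3}{2},\tfrac{\alpha_5}{2}),\qquad \sigma(X_2)=(0,1,\tfrac{\alpha_6}{2}),\qquad \sigma(X_3)=(0,0,1),
\]
that is
\[
\sigma=\begin{pmatrix} 1 & 0 & 0\\ -\tfrac{\alpha_3}{2} & 1 & 0\\ \tfrac{\alpha_5}{2} & \tfrac{\alpha_6}{2} & 1\end{pmatrix}.
\]
First I would check that $\sigma$ genuinely lies in $Aut(\h_3)$: it has exactly the matrix shape described at the start of Section 2, and its determinant datum is $\Delta=\alpha_1\alpha_4-\alpha_2\alpha_3=1\cdot1-0\cdot0=1\neq 0$, so $\sigma$ is an automorphism.

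It then remains to verify the two conjugation identities. I would do this abstractly: $\sigma$ maps $\r X_2$ (eigenvalues $+1$ for $\tau_1(0,0)$, $-1$ for $\tau_2(0,0)$), $\r X_1$ (eigenvalues $-1,+1$) and $\r X_3$ (eigenvalues $-1,-1$) onto the three eigenlines of $\tau_1(\alpha_3,\alpha_6)$ and $\tau_2(-\alpha_3,\alpha_5)$ bearing the same eigenvalue pairs. Hence $\sigma^{-1}\tau_1(\alpha_3,\alpha_6)\sigma$ and $\sigma^{-1}\tau_2(-\alpha_3,\alpha_5)\sigma$ are diagonal in the basis $\{X_1,X_2,X_3\}$ with entries $(-1,1,-1)$ and $(1,-1,-1)$, which are precisely $\tau_1(0,0)$ and $\tau_2(0,0)$. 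Alternatively one confirms the same by a direct product, computing $\sigma^{-1}$ (again lower triangular, with bottom-left entry $-\tfrac{\alpha_5}{2}-\tfrac{\alpha_3\alpha_6}{4}$) and multiplying out; the $\alpha_3\alpha_6$ cross terms cancel and leave the two diagonal involutions.

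The only genuine subtlety is bookkeeping: one must match eigenvalues and not merely eigenspaces, since otherwise $\sigma$ would conjugate $\Gamma_7(\alpha_3,\alpha_5,\alpha_6)$ to $\Gamma_7(0,0,0)$ only as a subgroup and could permute the two distinguished generators. Because every nonzero grading component is one-dimensional, prescribing both the target eigenline and its eigenvalue pair pins the conjugated involution down completely, so this matching is the entire substance of the proof; the verification itself reduces to a short triangular matrix multiplication.
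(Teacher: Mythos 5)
Your proposal is correct and exhibits exactly the automorphism the paper uses: its choice
$\sigma=\begin{pmatrix} 1 & 0 & 0\\ -\alpha_3/2 & 1 & 0\\ \alpha_5/2 & \alpha_6/2 & 1\end{pmatrix}$
is the paper's $\sigma$ with the parameter $\rho$ fixed at $\alpha_5/2$. The only difference is stylistic: the paper verifies the two conjugation identities by computing $\sigma^{-1}$ and multiplying matrices (determining $\rho$ along the way), whereas you justify them by matching common eigenlines with their eigenvalue pairs, which is a clean and valid shortcut to the same conclusion.
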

\begin{proof} Indeed if
\[
\sigma =\begin{pmatrix}
1&0&0\\
-\frac{\alpha_{3}}{2}&1&0\\
\rho&\frac{\alpha_{6}}{2}&1
\end{pmatrix}
\]
then
\[
\sigma^{-1} =\begin{pmatrix}
1&0&0\\
\frac{\alpha_{3}}{2}&1&0\\
-\frac{\alpha_{3} \alpha_{6}}{4}- \rho&-\frac{\alpha_{6}}{2}&1
\end{pmatrix}
\]
and
\[
\sigma^{-1} \tau_{1}(\alpha_{3},\alpha_{6}) \sigma = \tau_{1}(0,0)
\]
This automorphism satisfies
\[
\sigma^{-1} \tau_{2}(-\alpha_{3},\alpha_{5}) \sigma = \tau_{2}(0,\alpha_{5} - 2 \rho)
\]
If $\rho = \frac{\alpha_{5}}{2}$ that is
\[
\sigma =\begin{pmatrix}
1&0&0\\
-\frac{\alpha_{3}}{2}&1&0\\
 \frac{\alpha_{5}}{2}&\frac{\alpha_{6}}{2}&1
\end{pmatrix}
\]
then we have
\[
\sigma^{-1} \tau_{2}(-\alpha_{3},\alpha_{5}) \sigma = \tau_{2}(0,0) .
\]
\end{proof}
\noindent From the previous Lemma we have
\begin{prop}
Every $\z^{2}_{2}$-grading on $\mathfrak h_{3}$ is equivalent to the grading defined by
\[
\Gamma_{7}(0,0,0) = \{ Id,\tau_{1}(0,0),\tau_{2}(0,0),\tau_{4}(0,0)\}.
\]
\end{prop}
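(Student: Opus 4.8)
The plan is to show that the two Lemmas of this subsection, together with the Theorem classifying the $(\z_2)^k$-subgroups, reduce every $\z_2^2$-grading to the one attached to $\Gamma_7(0,0,0)$. I would begin by recalling the dictionary between gradings and subgroups: a $\z_2^2$-grading of $\mathfrak h_3$ is the same datum as a pair of commuting involutions generating a subgroup $\Gamma\cong\z_2^2$ of $Aut(\mathfrak h_3)$, the homogeneous components being the simultaneous eigenspaces of the elements of $\Gamma$ (this is exactly how the gradings attached to $\Gamma_7$ and $\Gamma_8$ were computed above). The statement to prove is then that, up to an automorphism of $\mathfrak h_3$, there is essentially one such subgroup.

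The key elementary observation is that conjugation preserves eigenspaces: if $\sigma\in Aut(\mathfrak h_3)$ and $\sigma^{-1}\tau\sigma=\tau'$, then $\tau' v=\lambda v$ forces $\tau(\sigma v)=\lambda(\sigma v)$, so $\sigma$ maps the $\lambda$-eigenspace of $\tau'$ onto the $\lambda$-eigenspace of $\tau$. Applying this to every element of a subgroup, I conclude that if two $\z_2^2$-subgroups are conjugate in $Aut(\mathfrak h_3)$, then the gradings they define are equivalent in the sense of \cite{[B.G]}. Thus it suffices to prove that every $\z_2^2$-subgroup is conjugate to $\Gamma_7(0,0,0)$.

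Now I assemble the pieces. By the Theorem classifying the $(\z_2)^k$-subgroups, any $\z_2^2$-subgroup is either some $\Gamma_7(\alpha_3,\alpha_5,\alpha_6)$ or some $\Gamma_8(\alpha_1,\alpha_2,\alpha_6,\alpha'_6)$. The first Lemma of this subsection produces $\sigma\in Aut(\mathfrak h_3)$ with $\sigma^{-1}\Gamma_7\sigma=\Gamma_8$, so it is enough to treat the subgroups of type $\Gamma_7$. For these, the second Lemma yields $\sigma\in Aut(\mathfrak h_3)$ with $\sigma^{-1}\tau_1(\alpha_3,\alpha_6)\sigma=\tau_1(0,0)$ and $\sigma^{-1}\tau_2(-\alpha_3,\alpha_5)\sigma=\tau_2(0,0)$. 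Since conjugation is a group homomorphism and $\tau_4=\tau_1\circ\tau_2$, the same $\sigma$ automatically sends the remaining nontrivial element $\tau_4(-\frac{\alpha_3\alpha_6}{2}-\alpha_5,-\alpha_6)$ to $\tau_4(0,0)$, giving $\sigma^{-1}\Gamma_7(\alpha_3,\alpha_5,\alpha_6)\sigma=\Gamma_7(0,0,0)$. As this conjugation matches generators to generators, no relabeling of $\z_2^2$ is needed, and the eigenspace remark yields the equivalence of gradings in the strict sense.

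I do not expect a genuine obstacle here, since the computational content has already been discharged by the two Lemmas; the only point that must be stated carefully is the passage from \emph{conjugate subgroup} to \emph{equivalent grading}, which is precisely the eigenspace observation above and requires no further calculation.
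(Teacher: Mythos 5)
Your proof is correct and takes essentially the same route as the paper: it combines the classification theorem (every $\z_2^2$-subgroup is $\Gamma_7$ or $\Gamma_8$) with the two Lemmas of the subsection, using the first to conjugate $\Gamma_8$ into $\Gamma_7$ and the second to normalize the parameters of $\Gamma_7$ to zero, the product relation $\tau_4=\tau_1\circ\tau_2$ handling the fourth element. Your explicit eigenspace argument for the passage from conjugate subgroups to equivalent gradings is a welcome detail that the paper only asserts (in the ``Consequence'' paragraph), not a departure from its method.
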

\noindent This grading corresponds to
\[
\mathfrak h_{3} =\{0\} \oplus \r(X_{2})\oplus \r(X_{2}) \oplus \r(X_{3}).
\]

\section{Riemannian structures $\z^{2}_{2}$-symmetric}

Let $ G / H$ be an homogeneous  $\Gamma $-symmetric space. We denoted by $\rho:\Gamma\rightarrow Aut (G) $ the injective homomorphism of groups. Each element $\rho (\gamma) $ for $\gamma\in\Gamma $ is called a symmetry of  the $\Gamma $-symmetric space.
\begin{defi}
The $\Gamma $-symmetric homogeneous space  $ G / H $ is called Riemannian $\Gamma $-symmetric  if there exists on $ G / H $ a Riemannian metric $ g $ such that
\begin{enumerate}
\item $g$ is $ G $-invariant,
 \item the symmetries $\rho (\gamma) $, $\gamma\in\Gamma $, are isometries.
 \end{enumerate}
\end{defi}
According to \cite {[G.R]}, such a metric is completely determined by a bilinear form $ B $ on the Lie algebra $\mathfrak g $ such that  
\begin{enumerate}
\item $B$ is $ ad\mathfrak h $ invariant ($\mathfrak h =\mathfrak {g} _e) $
\item
$B(\mathfrak g_{\gamma},\mathfrak g_{\gamma'}) = 0$ if  $\gamma \neq \gamma' \neq e$
\item The restriction of $ B $ to $\oplus_ {\gamma\neq e}\mathfrak g_\gamma $ is  positive definite.
\end{enumerate}
Consider on $\mathbb{H}_3$, the Heisenberg group, a  $\z^2_2 $-symmetric structure. It is determined, up to equivalence, by the $\z^2_2 $-grading of $\mathfrak h_3$
\[
\mathfrak h_{3} =\{0\} \oplus \r(X_{1})\oplus \r(X_{2}) \oplus \r(X_{3})
\]
Since every automorphism of $\mathfrak h_3 $ is an isometry of any invariant  Riemannian metric on $\H_3 $, we deduce
\begin{teo}
Any Riemannian structure $\z^2_2 $-symmetric over $\H_3 $ is isometric to the Riemannian structure associated with the grading
\[
\mathfrak h_{3} =\{0\} \oplus \r(X_{1})\oplus \r(X_{2}) \oplus \r(X_{3})
\]
and the Riemannian metric is written
\[
g = \omega^{2}_{1} + \omega^{2}_{2} + \lambda^{2} \omega^{2}_{3}
\]
with $\lambda \neq 0$, where $\{\omega_1,\omega_2,\omega_3\}$ is the dual basis of $\{X_1,X_2,X_3\}$.
\end{teo}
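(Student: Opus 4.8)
The plan is to reduce an arbitrary $\z_2^2$-symmetric Riemannian structure to a diagonal left-invariant metric, and then to rescale by a diagonal automorphism. First I would invoke the previous Proposition: every $\z_2^2$-grading of $\h_3$ is equivalent to the one defined by $\Gamma_7(0,0,0)$, namely $\h_3 = \{0\} \oplus \r X_1 \oplus \r X_2 \oplus \r X_3$, so it is enough to treat the symmetric structure attached to this grading. Here $\g_e = \{0\}$ and the three nontrivial homogeneous components are the lines $\r X_1$, $\r X_2$, $\r X_3$. By the characterization recalled from \cite{[G.R]}, the adapted metric corresponds to a bilinear form $B$ subject to the three listed conditions: condition (1) is empty since $\h = \g_e = \{0\}$; condition (2) forces $B(X_i, X_j) = 0$ for $i \neq j$, because distinct basis vectors lie in distinct nonzero components, so $B$ is diagonal in $\{X_1, X_2, X_3\}$; and condition (3) gives $B(X_i, X_i) = a_i > 0$. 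Thus every such metric is left-invariant of the form $g = a\,\omega_1^2 + b\,\omega_2^2 + c\,\omega_3^2$ with $a, b, c > 0$.

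Next I would normalize $(a, b, c)$. As recalled just before the statement, any Lie algebra automorphism $\phi$ of $\h_3$ integrates to a group automorphism of the simply connected group $\H_3$ realizing an isometry between the left-invariant metric of $B$ and that of $\phi^* B$; on Gram matrices this is the action $M \mapsto A^T M A$, where $A$ is the matrix of $\phi$ in $\{X_1, X_2, X_3\}$. I would use the diagonal automorphism
\[ A = \begin{pmatrix} s & 0 & 0 \\ 0 & t & 0 \\ 0 & 0 & st \end{pmatrix}, \qquad s,\, t \neq 0, \]
which is indeed an automorphism of $\h_3$ because $X_3 = [X_1, X_2]$ must be multiplied by the determinant $st$ of the upper $2 \times 2$ block. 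Then $A^T \mathrm{diag}(a, b, c)\, A = \mathrm{diag}(a s^2, b t^2, c s^2 t^2)$, and the choice $s = a^{-1/2}$, $t = b^{-1/2}$ reduces this to $\mathrm{diag}(1, 1, c/(ab))$. Setting $\lambda^2 = c/(ab) > 0$ gives $g = \omega_1^2 + \omega_2^2 + \lambda^2\,\omega_3^2$ with $\lambda \neq 0$, which is the claim.

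The arithmetic is immediate, so the only delicate point is the reduction principle itself: one must correctly use that on the simply connected nilpotent group $\H_3$ the automorphism $\phi$ produces a genuine isometry between $(\H_3, g)$ and $(\H_3, \phi^* g)$, so that $M \mapsto A^T M A$ moves within one isometry class rather than merely rewriting a fixed metric in a new frame. Granting the automorphism-isometry correspondence stated before the theorem, no obstruction remains, as $a, b, c > 0$ keeps $s, t$ real and $\lambda^2$ positive.
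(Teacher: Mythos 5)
Your proof is correct and follows essentially the same route as the paper: reduce to the $\Gamma_7(0,0,0)$ grading via the preceding Proposition, observe that the conditions on $B$ force a diagonal positive metric $a\,\omega_1^2+b\,\omega_2^2+c\,\omega_3^2$, and then normalize two coefficients by an automorphism. The only difference is that where the paper cites \cite{[G.P1]} for the final normalization, you carry it out explicitly with the diagonal automorphism $\mathrm{diag}(s,t,st)$ (which moreover visibly preserves the grading, so the rescaled metric is still adapted to the same $\z_2^2$-symmetric structure) --- a welcome filling-in of the cited step.
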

\begin{proof}
Indeed, as the components of the grading are orthogonal, the Riemannian metric $ g $, which coincides with the form $ B $ verifies
\[
g=\alpha_{1} \omega^{2}_{1} + \alpha_{2} \omega^{2}_{2} + \alpha_{3} \omega^{2}_{3}
\]
with $\alpha_{1} > 0,\alpha_{2}> 0,\alpha_{3}> 0$.
According to \cite{[G.P1]}, we reduce the coefficients to  $\alpha_1 =\alpha_2 =1$.
\end{proof}
\medskip

\noindent{\bf Remark.} According to \cite{[H]} and \cite{[G.P2]}, this metric is naturally reductive for any $\lambda$.

\section{Lorentzian  $\z^{2}_{2} $ - symmetric structures on $\H_{3}$}
We say that a homogeneous space $(M = G/H, g)$ is {\it Lorentzian} if the canonical action of $G$ on $M$ preserves a Lorentzian metric (i.e. a smooth field  of non degenerate quadratic form of signature $(n-1,1)$).
\begin{prop}[\cite{[D.Z]}]
Modulo an automorphism and a multiplicative constant, there exists on $\h_{3}$ one left-invariant metric assigning a strictly positive length on the center of
$\mathfrak h_{3} $.
\end{prop}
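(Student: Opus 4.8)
The plan is to translate the statement into linear algebra on $\h_3$ and then classify orbits under the combined action of $Aut(\h_3)$ and scaling. A left-invariant pseudo-Riemannian metric on $\H_3$ is the same datum as a nondegenerate symmetric bilinear form $g$ on $\h_3$; it is Lorentzian precisely when its signature is $(2,1)$, and the hypothesis of ``strictly positive length on the center'' means $r:=g(X_3,X_3)>0$, since the center of $\h_3$ is $\r X_3$. I would write $g$ as a symmetric matrix $G=(g(X_i,X_j))$ in the basis $\{X_1,X_2,X_3\}$. An automorphism $\tau$ with matrix $A$ (as in Section 2) acts by $G\mapsto A^{T}GA$, and scaling by $s>0$ acts by $G\mapsto sG$; the goal is to show all admissible $G$ lie in a single orbit of the group generated by these two actions.

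First I would use the lower part of $A$ to normalize the center. Since $\tau(X_3)=\Delta X_3$, taking $A$ with $\alpha_1=\alpha_4=1$, $\alpha_2=\alpha_3=0$ (hence $\Delta=1$) and $\alpha_5,\alpha_6$ free gives an automorphism fixing the $X_1,X_2$ directions modulo $X_3$. Because $r\neq 0$, I can solve the two linear equations that make the new $(1,3)$ and $(2,3)$ entries vanish, with $\alpha_5,\alpha_6$ determined by $g(X_1,X_3),g(X_2,X_3)$ and $r$. After this reduction $X_3$ is orthogonal to $\mathrm{span}(X_1,X_2)$, so $G=Q\oplus(r)$ is block diagonal with $Q=\left(\begin{smallmatrix} g_{11}&g_{12}\\ g_{12}&g_{22}\end{smallmatrix}\right)$. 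The automorphisms preserving this block form are then exactly the block-diagonal ones (a nonzero $\alpha_5$ or $\alpha_6$ would reintroduce a $(1,3)$ or $(2,3)$ entry $\Delta\alpha_5 r$ or $\Delta\alpha_6 r$), with $2\times 2$ block $M\in GL_2(\r)$ and corner $\Delta=\det M$; they act by $Q\mapsto M^{T}QM$ and $r\mapsto(\det M)^2 r$.

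Because the total signature is $(2,1)$ and $r>0$, the block $Q$ must have signature $(1,1)$, so $\det Q<0$ and $Q$ is nondegenerate. The key observation is that $\det Q/r$ is invariant under the block-diagonal automorphisms, since $\det(M^{T}QM)=(\det M)^2\det Q$ and $r$ scale by the same factor $(\det M)^2$, and moreover that it is the only continuous invariant. Indeed, given two such forms with equal $\det Q/r$, Sylvester's law of inertia provides $M_1\in GL_2(\r)$ with $M_1^{T}QM_1=Q'$; the ratio equality then forces $(\det M_1)^2=\det Q'/\det Q=r'/r$, so $r\mapsto(\det M_1)^2 r=r'$ is matched simultaneously, and any residual freedom lies in the stabilizer $O(Q')\cong O(1,1)$, whose elements have $(\det)^2=1$ and hence fix $r'$. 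Finally, scaling sends $\det Q/r\mapsto s\,(\det Q/r)$, and since this ratio is negative I can choose $s>0$ to normalize it to $-1$. This exhibits a single orbit, with representative $g=\omega_1^2-\omega_2^2+\omega_3^2$, proving uniqueness up to automorphism and a multiplicative constant; existence is witnessed by this very form.

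I expect the main obstacle to be the transitivity argument of the third paragraph: one must verify carefully that $\det Q/r$ is a \emph{complete} invariant of the block-diagonal $GL_2(\r)$-action, i.e.\ that matching this single ratio suffices to match both $Q$ (up to congruence) and $r$ at once. This hinges precisely on the coupling $\Delta=\det M$ between the action on the $X_1,X_2$-plane and on the center, together with the fact that the stabilizer $O(1,1)$ of a fixed $(1,1)$-form has determinant $\pm1$ and therefore cannot further adjust $r$.
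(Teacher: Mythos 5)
Your proof is correct, and it is necessarily a different route from the paper's, because the paper gives no proof at all: the proposition is imported from Dumitrescu--Zeghib \cite{[D.Z]}, and all the paper adds is the remark that an automorphism of $\mathfrak{h}_{3}$ preserves the center, sends $X_{3}$ to $\lambda X_{3}$ with $\lambda\neq 0$, and acts on the plane of $X_{1},X_{2}$ with determinant exactly $\lambda$. That coupling is precisely the pivot of your argument, which you expand into a complete orbit classification: first kill $g(X_{1},X_{3})$ and $g(X_{2},X_{3})$ using the automorphisms $X_{1}\mapsto X_{1}+\alpha_{5}X_{3}$, $X_{2}\mapsto X_{2}+\alpha_{6}X_{3}$, $X_{3}\mapsto X_{3}$ (possible precisely because $r=g(X_{3},X_{3})>0$); then observe that the residual block-diagonal action is $Q\mapsto M^{T}QM$, $r\mapsto(\det M)^{2}r$; then show $\det Q/r<0$ is a complete invariant of that action, which a positive scaling normalizes to $-1$. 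Your key step --- Sylvester's law plus the forced equality $(\det M_{1})^{2}=\det Q'/\det Q=r'/r$, with the stabilizer $O(1,1)$ harmless since its elements have determinant $\pm 1$ --- is correct, as is the computation that a nonzero $(\alpha_{5},\alpha_{6})$ reintroduces off-diagonal entries $r\Delta\alpha_{5}$, $r\Delta\alpha_{6}$. What your route buys is a self-contained elementary proof that also makes transparent the trichotomy the paper cites immediately afterwards from \cite{[R]} and \cite{[R.R]} (the sign of $r$, or its vanishing, is the remaining invariant); what the paper's route buys is brevity, by deferring to the literature. One interpretive point you handled correctly: ``metric'' here must mean Lorentzian metric of signature $(2,1)$, as the surrounding section dictates; otherwise the statement as phrased would fail, since automorphisms and positive scalings preserve signature and the Riemannian metrics with positive center would form a second orbit.
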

The Lie algebra $\mathfrak h_ {3} $ is generated by the central vector $X_3$ and  $X_1$ and $X_2 $ such that $ [X_1, X_2] =  X_3$. The automorphisms of the Lie algebra preserve the center and then send the element $X_3$ on $\lambda X_3$, with $\lambda \in \r ^{*}$. Such an automorphism acts on the plane generated by $X_1$ and $X_2 $ as an automorphism of determinant $\lambda $. \\
It is shown in \cite {[R]} and \cite {[R.R]} that, modulo an automorphism of $\mathfrak h_ {3} $, there are three classes of invariant Lorentzian metrics on $\H_{3}$,  corresponding to the cases where $||X_3||$ is negative, positive or zero.\\
We propose to look at the Lorentz metrics that are associated with  the  $\z_2^2$-symmetric structure over $\H_3$.
\begin{defi}
Let $ M = G / H$ be a homogeneous   $\Gamma $-symmetric space. Let $g $  be a Lorentz metric on $ M $.
We say that the metric $ g $ is $\z^2_2 $-symmetric Lorentzian if one of the two conditions is satisfied:
\begin{enumerate}
\item The homogeneous non trivial components $\g_\gamma$ of the $\Gamma$-graded Lie algebra of $G$ are orthogonal and non-degenerate with respect to the induced bilinear form $ B $.
\item One non trivial component $\g_{\lambda_0}$ is degenerate, the other components are orthogonal and non-degenerate,  and the exists a component $\g_{\lambda_1}$ such that the signature of the restriction to $B$ at $\g_{\lambda_0} \oplus \g_{\lambda_1}$ is  $(1,1)$.
\end{enumerate}
\end{defi}
If  $\mathfrak g$  is the Heisenberg algebra equipped with a $\z^2_2 $-grading, then by automorphism, we can  reduce to the case where
$\Gamma = \Gamma_7$. In this case, the grading of $\mathfrak  h_{3} $ is given by:
\[
\mathfrak h_{3}= \mathfrak g_{0} + \mathfrak g_{+ -} + \mathfrak g_{- +} +\mathfrak g_{- -}
\]
with
\[
\begin{cases}
\mathfrak g_{0} = \{0\},\\
\mathfrak g_{+ -} =  \r\left(X_{2} -\frac{\alpha_{6}}{2}X_{3}\right),\\
\mathfrak g_{- +} =  \r\left(X_{1} -\frac{\alpha_{3}}{2}X_{2}+ \frac{\alpha_{5}}{2}X_{3}\right),\\
\mathfrak g_{- -}  = \r\left(X_{3}\right).
\end{cases}
\]
Assume
\[
Y_{1} = X_{1} -\frac{\alpha_{3}}{2}X_{2}+ \frac{\alpha_{5}}{2}X_{3}\qquad
Y_{2} = X_{2} -\frac{\alpha_{6}}{2}X_{3}\qquad 
Y_{3} = X_{3}.
\]
The dual basis is
\[
\vartheta_{1} = \omega_{1} \qquad \vartheta_{2} = \omega_{2} + \frac{\alpha_{3}}{2} \omega_{1} \qquad \vartheta_{3} = \omega_{3} -\frac{\alpha_{6}}{2}\omega_{2} -\left(\frac{\alpha_{3}\alpha_{6}}{4} + \frac{\alpha_{5}}{2}\right)\omega_{1}
\]
where $\{\omega_1,\omega_2,\omega_3\}$ is the dual basis of the base $\{X_1,X_2,X_3\}$.\\
\begin{description}
\item[Case $I$] The components $ \mathfrak g_{+ -}, \mathfrak g_{- +},\mathfrak g_{- -}$ are non-degenerate. The quadratic form induced on $\mathfrak  h_3$ therefore writes
\[
g = \lambda_{1}\omega^{2}_{1} +
\lambda_{2}\left(\omega_{2} + \frac{\alpha_{3}}{2} \omega_{1}\right)^{2} +
\lambda_{3} \left(\omega_{3} - \frac{\alpha_{6}}{2} \omega_{2} -
\left(\frac{\alpha_{5}}{2} + \frac{\alpha_{3} \alpha_{6}}{4}\right)\omega_{1}\right)^{2}
\]
with $\lambda_{1},\lambda_{2},\lambda_{3} \neq 0$.
The change of basis associated with the matrix
\[
\begin{pmatrix}
1&0&\\
\frac{\alpha_{3}}{2}&1&0\\
-\frac{\alpha_{5}}{2} - \frac{\alpha_{3} \alpha_{6}}{4}&- \frac{\alpha_{6}}{2}&1
\end{pmatrix}
\]
is an automorphism. Thus $ g $ is isometric to
\[
g = \lambda_{1}\omega^{2}_{1} +
\lambda_{2}\omega^{2}_{2} + \lambda_{3}\omega^{2}_{3} .
\]
Since the signature is $(2,1) $ one of the $\lambda_ i$ is negative and the two others positive.

\begin{prop}
Every Lorentzian metric $\z^2_2 $-symmetric $ g $ on $\H_3$ such that the components of the grading of $\mathfrak {h}_3 $ are non degenerate, is 
reduced to one of these two forms:
\[
\begin{cases}
g = - \omega^{2}_1 + \omega^{2}_2 + \lambda^{2} \omega^{2}_3\\
g = \omega^{2}_1 + \omega^{2}_2  - \lambda^{2} \omega^{2}_3
\end{cases}
\]
\end{prop}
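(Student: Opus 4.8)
The plan is to take over from the diagonal form already reached in Case $I$, namely $g = \lambda_1\omega^2_1 + \lambda_2\omega^2_2 + \lambda_3\omega^2_3$ with exactly one $\lambda_i$ negative and the other two positive, and to complete the normalization using only automorphisms of $\mathfrak h_3$, which act as isometries between the corresponding invariant metrics. The organizing observation is that $X_3$ spans the center of $\mathfrak h_3$, so every automorphism sends $X_3$ to $\Delta X_3$ with $\Delta\neq 0$; hence $g(X_3,X_3)=\lambda_3$ is multiplied by $\Delta^2>0$, and the sign of $\lambda_3$—equivalently the causal type of the center—is an isometry invariant. This sign is precisely what will separate the two target forms.

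Next I would restrict to the automorphisms preserving the plane $\mathrm{span}(X_1,X_2)=X_3^{\perp}$, that is those of block type
\[
\sigma = \begin{pmatrix} a & b & 0 \\ c & d & 0 \\ 0 & 0 & \Delta \end{pmatrix}, \qquad \Delta = ad-bc \neq 0,
\]
where $M=\begin{pmatrix} a & b \\ c & d \end{pmatrix}$ ranges over all of $GL_2(\r)$. A direct computation of the pullback gives $\sigma X_1=aX_1+cX_2$, $\sigma X_2=bX_1+dX_2$, $\sigma X_3=\Delta X_3$, so there are no cross terms with $\omega_3$ and $\sigma^{*}g$ acts on the plane part $\lambda_1\omega^2_1+\lambda_2\omega^2_2$ by the congruence $M^{\mathsf T}\,\mathrm{diag}(\lambda_1,\lambda_2)\,M$, while the central term is sent to $\Delta^2\lambda_3\,\omega^2_3$. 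Thus the restriction of $g$ to $X_3^{\perp}$ can be reshaped by the full congruence action of $GL_2(\r)$, at the cost of simultaneously rescaling $\lambda_3$ by the positive factor $\Delta^2$.

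Then I would invoke Sylvester's law of inertia. If $\lambda_3>0$, the signature $(2,1)$ forces $\mathrm{diag}(\lambda_1,\lambda_2)$ to have signature $(1,1)$, so a suitable $M$ brings the plane form to $\mathrm{diag}(-1,1)$, placing the negative direction on $\omega_1$; setting $\lambda^2:=\Delta^2\lambda_3>0$ yields $g=-\omega^2_1+\omega^2_2+\lambda^2\omega^2_3$, the first form. If $\lambda_3<0$, then $\mathrm{diag}(\lambda_1,\lambda_2)$ has signature $(2,0)$, a suitable $M$ brings it to $\mathrm{diag}(1,1)$, and the central term becomes $\Delta^2\lambda_3=:-\lambda^2<0$, giving $g=\omega^2_1+\omega^2_2-\lambda^2\omega^2_3$, the second form.

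The argument is essentially bookkeeping and presents no genuine obstacle; the step requiring the most care is tracking the central coefficient, which is multiplied by $(\det M)^2$ at the same time as the plane form is transformed by congruence under $M$. Because this factor is positive, it both fixes the value of $\lambda$ obtained and confirms that the sign of $\lambda_3$ is preserved under every automorphism, so the two cases above are genuinely distinct and together exhaust all admissible metrics of this type.
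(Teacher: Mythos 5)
Your proposal is correct and follows essentially the same route as the paper: the paper's Case $I$ diagonalizes $g$ to $\lambda_1\omega_1^2+\lambda_2\omega_2^2+\lambda_3\omega_3^2$ by an explicit automorphism and then appeals to the signature and to the action of $Aut(\mathfrak h_3)$ (which scales $X_3$ by the determinant of its action on the plane) to normalize the coefficients. You simply make explicit the normalization step the paper leaves implicit — the $GL_2(\r)$ congruence on the plane part together with the $\Delta^2$-scaling of the central coefficient and Sylvester's law of inertia — which is a faithful completion of the same argument, including the correct observation that the sign of $g(X_3,X_3)$ is an automorphism invariant separating the two normal forms.
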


\item[Case $II$] Suppose that a component is degenerate.
When this component is $\r (x_2 +\frac {\alpha_6}{2} X_3) $ or $\r (X_1 -\frac {\alpha_3} {2} X_2 +\frac{\alpha_5}{2}  X_3 ) $ then, by automorphism, we reduce to the above case.\\
Suppose then that the component containing the center is degenerate.\\
Thus the quadratic form induced on $\mathfrak h_3$ is written
\begin{eqnarray*}
g &=& \omega^{2}_{1} +
\left[\omega_{3} - \frac{\alpha_{6}}{2} \omega_{2} -
\left(\frac{\alpha_{5}}{2} + \frac{\alpha_{3} \alpha_{6}}{4}\right)\omega_{1}\right]^{2}\\
&-& \left[\omega_{2} - \omega_{3} + \frac{\alpha_{6}}{2} \omega_{2} +
\left(\frac{\alpha_{5}}{2} + \frac{\alpha_{3} \alpha_{6}}{4}\right)\omega_{1}\right]^{2}.
\end{eqnarray*}

The change of basis associated with the matrix
\[
\begin{pmatrix}
1&0&\\
\frac{\alpha_{3}}{2}&1&0\\
-\frac{\alpha_{5}}{2} - \frac{\alpha_{3} \alpha_{6}}{4}&- \frac{\alpha_{6}}{2}&1
\end{pmatrix}
\]
is given by an automorphism. Thus $g$ is isomorphic to
\[
g = \omega^{2}_{1} +  \omega^{2}_{3} -
(\omega_{2} - \omega_{3})^{2}.
\]
\begin{prop}
Every Lorentzian  $\z^2_2 $-symmetric $ g $ metric on $\H_3$ such that the component of the grading of $\mathfrak {h} _3 $ containing the center is degenerate, is reduced to the form
\[ g = \omega^{2}_{1} +  \omega^{2}_{3} -
(\omega_{2} - \omega_{3})^{2}.
\]
\end{prop}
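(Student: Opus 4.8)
The plan is to show that once the degenerate component is the one containing the center, the induced metric has essentially no remaining freedom, so that after an automorphism it reduces to the single displayed form. I would begin by writing down the most general symmetric bilinear form compatible with Case (2) of the definition. The grading has components $\g_{+-}=\r Y_2$, $\g_{-+}=\r Y_1$, $\g_{--}=\r Y_3$, and by hypothesis the component $\g_{--}=\r Y_3$ containing the center is degenerate. The definition then forces: $\g_{+-}$ and $\g_{-+}$ are orthogonal and non-degenerate, $\g_{--}$ is orthogonal to whichever component it is not paired with, and there is a component $\g_{\lambda_1}$ so that $B$ restricted to $\g_{--}\oplus\g_{\lambda_1}$ has signature $(1,1)$. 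Since $\g_{--}$ is a line on which $B$ vanishes (degenerate), the pairing with $\g_{\lambda_1}$ must be a nonzero off-diagonal term producing a hyperbolic plane.

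Next I would normalize. On the non-degenerate line $\g_{-+}=\r Y_1$ the form is a nonzero multiple of $\vartheta_1^2=\omega_1^2$, which I rescale to $+\omega_1^2$. On the hyperbolic plane $\g_{--}\oplus\g_{\lambda_1}$, since $B(Y_3,Y_3)=0$, the form is determined by the cross term $B(Y_3,\,\cdot\,)$ and the self-pairing on $\g_{\lambda_1}$; a standard normalization of a signature-$(1,1)$ form with one null basis vector lets me write it, in the $\vartheta$-coordinates, as $\vartheta_3^2-(\vartheta_2-\vartheta_3)^2$ up to scaling — this is exactly the combination appearing in the displayed expression for $g$ before the change of basis. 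Writing everything out in the $\{\omega_1,\omega_2,\omega_3\}$ dual basis using the substitutions $\vartheta_1=\omega_1$, $\vartheta_2=\omega_2+\frac{\alpha_3}{2}\omega_1$, $\vartheta_3=\omega_3-\frac{\alpha_6}{2}\omega_2-(\frac{\alpha_3\alpha_6}{4}+\frac{\alpha_5}{2})\omega_1$ recovers the two-line expression for $g$ stated just above the Proposition.

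The final step is to kill the parameters $\alpha_3,\alpha_5,\alpha_6$. Here I invoke the change of basis given by the displayed matrix
\[
\begin{pmatrix}
1&0&0\\
\tfrac{\alpha_3}{2}&1&0\\
-\tfrac{\alpha_5}{2}-\tfrac{\alpha_3\alpha_6}{4}&-\tfrac{\alpha_6}{2}&1
\end{pmatrix},
\]
which is of the general automorphism type of $\h_3$ (lower-triangular with $\Delta=1$, preserving the bracket $[X_1,X_2]=X_3$), hence is an isometry of any left-invariant metric. Pulling back the $\vartheta_i$ through this automorphism turns $\vartheta_1\mapsto\omega_1$, $\vartheta_2\mapsto\omega_2$, $\vartheta_3\mapsto\omega_3$, so the parameter-laden form collapses to $g=\omega_1^2+\omega_3^2-(\omega_2-\omega_3)^2$, which is the asserted normal form.

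The main obstacle is the middle step: verifying that the signature-$(1,1)$ condition on the degenerate plane genuinely pins down the form to the single expression $\vartheta_3^2-(\vartheta_2-\vartheta_3)^2$ rather than a one-parameter family. The point to check carefully is that the overall multiplicative constant is the only residual freedom and is absorbed by scaling, and that no separate coefficient survives on the hyperbolic block once the null direction $Y_3$ and the signature are fixed; this is where I would confirm that the degeneracy of $\g_{--}$ plus the $(1,1)$ signature leaves no modulus, so that—unlike Case I, which produced a $\lambda$—Case II yields a unique metric. The bracket-compatibility check that the triangular matrix above is a genuine automorphism is routine and follows directly from the matricial description of $Aut(\h_3)$ given in Section 2.
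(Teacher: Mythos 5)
Your proposal is correct and follows essentially the same route as the paper: the same grading with basis $Y_1,Y_2,Y_3$ and dual basis $\vartheta_i$, the same hyperbolic block on $\r Y_2\oplus \r Y_3$, and the same concluding triangular automorphism sending $X_i$ to $Y_i$; the only real difference is that you derive the displayed expression for $g$ from the definition, whereas the paper simply asserts it in normalized form before applying the automorphism. The step you flag as the main obstacle does go through, and is worth recording since the paper omits it: writing the form as $a\vartheta_1^2+b\vartheta_2^2+2c\vartheta_2\vartheta_3$ with $c\neq 0$ (the $(1,1)$ block plus total signature $(2,1)$ forces $a>0$), the maps $Y_1\mapsto sY_1$, $Y_2\mapsto tY_2+uY_3$, $Y_3\mapsto stY_3$ are automorphisms of $\h_3$, and choosing $s=a^{-1/2}$, then $t$ with $st^2c=1$ (after the automorphism $(Y_1,Y_2,Y_3)\mapsto(-Y_1,Y_2,-Y_3)$ if $c<0$), then $u$ with $bt^2+2ctu=-1$, normalizes $(a,b,c)$ to $(1,-1,1)$ --- precisely because the shear parameter $u$ gives one more degree of freedom than in Case I, no modulus $\lambda$ survives and the metric reduces to the single form $\omega_1^2+\omega_3^2-(\omega_2-\omega_3)^2$.
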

From \cite{[C.P]} is the only flat Lorentzian metric, left invariant on the Heisenberg group.
\end{description}

\strut\hfill  Universit\'e de Haute Alsace,\\
\strut\hfill LMIA\\
\strut\hfill 4 rue des fr\`eres Lumi\`ere,
68093 Mulhouse, France \\
\vspace{2mm}
\strut\hfill Michel.Goze@uha.fr \\

\strut\hfill Universit\`a degli Studi di Cagliari,\\
\strut\hfill Dipartimento di Matematica e Informatica\,\\
\strut\hfill Via Ospedale 72, 09124 Cagliari, ITALIA\\
\vspace{2mm}
\strut\hfill piu@unica.it
\bigskip

\end{document}